\DeclareMathOperator{\Rep}{Re}
\DeclareMathOperator{\Imp}{Im}
\DeclareMathOperator{\Den}{Den}
\newcommand{\R}{\mathbb {R}}
\newcommand{\C}{\mathbb {C}}
\newcommand{\Z}{\mathbb {Z}}
\newcommand{\Q}{\mathbb {Q}}
\newcommand{\CHF}{{}_1F_1}
\newcommand{\wC}{\widetilde{C}}
\numberwithin{equation}{section}
\theoremstyle{plain}
\newtheorem{Lemma}{Lemma}[section]
\newtheorem{Theorem}[Lemma]{Theorem}
\newtheorem{Proposition}[Lemma]{Proposition}
\newtheorem{Corollary}[Lemma]{Corollary}
\theoremstyle{definition}
\newtheorem{Remark}[Lemma]{Remark}
\begin{document}
% --------------------------------------------------------------------------

\title{An asymptotic property on a reciprocity law for the Bettin--Conrey cotangent sum}
\author{Hirotaka Akatsuka \and Yuya Murakami}
\address{%
Otaru University of Commerce,
3--5--21, Midori, Otaru, Hokkaido, 047--8501,
Japan.
}
\email{akatsuka@res.otaru-uc.ac.jp}
\address{%
	Faculty of Mathematics, Kyushu University
	744, Motooka, Nishi-ku, Fukuoka, 819--0395, Japan.}
\email{murakami.yuya.896@m.kyushu-u.ac.jp}

% --------------------------------------------------------------------------

\begin{abstract}
In 2013 Bettin and Conrey have introduced a cotangent sum $c \colon \mathbb{Q}_{>0}\to \mathbb{R}$, which can be regarded as a variant of the Dedekind sum. They have discovered that the cotangent sum satisfies a kind of reciprocity laws. Roughly speaking, the reciprocity law for $c(x)$ means that there is a relation between $c(x)$ and $c(1/x)$ modulo holomorphic functions. Furthermore they have investigated Taylor coefficients $g_n$ of the implicit holomorphic function, which appears in the reciprocity law for $c(x)$, at $x=1$. As a result, they have obtained an asymptotic formula for $g_n$ as $n\to\infty$. In this paper we improve it to an asymptotic series expansion. This resolves a conjecture by Zagier. A new ingredient of this paper is to use the confluent hypergeometric function of the second kind.
\end{abstract}

% --------------------------------------------------------------------------

\maketitle

\section{Introduction}\label{sec:Intro}
Let $h$, $k\in\Z_{>0}$ with $\gcd(h,k)=1$.
In \cite{BeCo1} Bettin and Conrey introduced a cotangent sum
defined by
\[
 c\left(\frac{h}{k}\right)
 =-\sum_{a=1}^{k-1}\frac{a}{k}\cot\left(\frac{\pi ha}{k}\right).
\]
For $x\in\Q_{>0}$ we set
\begin{equation}\label{eq:funcg}
 g(x)=xc(x)+c\left(\frac{1}{x}\right)-\frac{1}{\pi\Den(x)},
\end{equation}
where $\Den(x)\in\Z_{\geq 1}$ is the denominator of $x$.
Bettin and Conrey have proved that $g(x)$ is continued to
$\C\setminus\R_{\leq 0}$ as a holomorphic function.
They regarded this property as a kind of reciprocity laws for $c(x)$.
Because of this property together with the periodicity $c(x+1)=c(x)$,
the function $c(x)$ is a prototype of
the quantum modular form proposed by Zagier \cite{Zag1}.
See also \cite[Chapter 21]{BFOR} for quantum modular forms.

As has been obtained by Bettin--Conrey in \cite[\S 3]{BeCo1},
the reciprocity formula for $c(x)$ is closely related to the weight $1$
holomorphic Eisenstein series $E_1(z)$ with respect to the full modular group.
Here we explain this briefly. 
The function $E_1(z)$ is defined by
\[
E_1(z)=1-4\sum_{m=1}^{\infty}d(m)e^{2\pi imz}
\]
for $\Imp(z)>0$, where $d(m)$ is the number of positive divisors for $m$.
While $E_1(z)$ is not a modular form in the strict sense,
it has a flavor of modularity.
To see this, we set $\psi(z)=E_1(z)-(1/z)E_1(-1/z)$.
Then $\psi(z)$ is originally defined in $\Imp(z)>0$ and is
analytically continued to $z\in\C\setminus\R_{\leq 0}$.
Namely, $\psi(z)$ has a nice property compared with a single $E_1(z)$.
Moreover, $\psi(z)$ essentially coincides with (1.1), that is,
$\psi(x)=xg(x)/(2i)$ holds on $x\in\Q_{>0}$.
We also note that $\psi(z)$ satisfies a three term relation
in the theory of period functions by Lewis--Zagier \cite{LeZa}.
We can find generalizations of the above results:
see \cite{BeCo2} for general weights and \cite{Fo}
for twisted Eisenstein series and corresponding cotangent sums.

Bettin and Conrey investigated more detailed properties of $g(x)$.
We write the Taylor expansion of $g(x)$ at $x=1$ as
\begin{equation}\label{eq:gTC}
 g(x)=\frac{1}{\pi}\sum_{n=0}^{\infty}(-1)^n g_n (x-1)^n.
\end{equation}
As was shown by Bettin and Conrey, $g_0=-1$, $g_1=1/2$
and
\begin{equation}\label{eq:gnexpr}
 g_n=\frac{1}{n(n+1)}+2b_n+2\sum_{j=0}^{n-2}\binom{n-1}{j}b_{j+2}
\end{equation}
hold for any $n\in\Z_{\geq 2}$, where
\[
 b_k=\frac{B_k\zeta(k)}{k}.
\]
Here $B_k$ is the $k^{\text{th}}$ Bernoulli number and $\zeta(s)$
is the Riemann zeta-function.
In \cite[Theorem 2]{BeCo1} they also proved\footnote{%
The right-hand side of the fifth displayed formula
in \cite[page 5723]{BeCo1}
should be doubled.
Consequently, $2^{5/4}$ should be replaced by
$2^{9/4}$ in \cite[Theorem 2]{BeCo1}.
}
\begin{equation}\label{eq:BCasymptotic}
 g_n-\frac{1}{n}=2^{9/4}\pi^{3/4}n^{-3/4}e^{-2\sqrt{\pi n}}
\left(\sin\left(2\sqrt{\pi n}+\frac{3\pi}{8}\right)+o(1)\right)
\end{equation}
as $n\to\infty$.
Zagier has conjectured that (\ref{eq:BCasymptotic}) can be
improved to an asymptotic series expansion of the shape
\begin{equation}\label{eq:ZC}
 g_n-\frac{1}{n}=e^{-2\sqrt{\pi n}}
\Biggl(
\sum_{\begin{subarray}{c}
 3\leq k\leq K\\
 k\equiv 1\pmod{2}
      \end{subarray}}
C_k n^{-k/4}\sin(2\sqrt{\pi n}+D_k)+o(n^{-K/4})
\Biggr)
\end{equation}
for certain constants $C_k$, $D_k\in\R$.
See \cite[p.5724]{BeCo1} for the conjecture.

The purpose of this paper is
to resolve the conjecture (\ref{eq:ZC}) as follows:
\begin{Theorem}\label{Thm:main}
There are
$\widetilde{C}_l\in\langle\pi^{2m}:m=0,1,2,\ldots,l\rangle_{\Q}\setminus\{0\}$ 
such that
\begin{equation}\label{eq:main}
\begin{aligned}
g_n-\frac{1}{n}
=2^{9/4}\pi^{3/4}e^{-2\sqrt{\pi n}}
\sum_{l=0}^L(2\pi)^{-l/2}\wC_l n^{-\frac{l}{2}-\frac{3}{4}}
\sin\left(2\sqrt{\pi n}+\frac{\pi l}{4}+\frac{3\pi}{8}\right)
+O\left(n^{-\frac{L}{2}-\frac{5}{4}}e^{-2\sqrt{\pi n}}\right)
\end{aligned}
\end{equation}
as $n\to\infty$ for each $L\in\Z_{\geq 1}$,
where the implied constant depends only on $L$.
\end{Theorem}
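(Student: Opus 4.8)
The plan is to turn the explicit formula (\ref{eq:gnexpr}) into an exact expression in terms of the confluent hypergeometric function of the second kind $U(a,b,z)$, and then to read off (\ref{eq:main}) from the large-$b$ asymptotics of $U(2,n+2,z)$. First I would simplify $g_n-1/n$. Since $b_k=0$ for odd $k\ge 3$, only even-index terms survive, and for even $k\ge 2$ the functional equation of $\zeta$ gives
\[
 b_k=\frac{B_k\zeta(k)}{k}=-\zeta(1-k)\zeta(k)=-2(2\pi)^{-k}\cos\!\left(\frac{\pi k}{2}\right)\Gamma(k)\zeta(k)^2 .
\]
Inserting $\zeta(k)^2=\sum_{m\ge 1}d(m)m^{-k}$ together with $\Gamma(k)=\int_0^\infty t^{k-1}e^{-t}\,dt$ into the binomial sum of (\ref{eq:gnexpr}), and summing the binomial index in closed form, I expect each $m$ to contribute an integral of the shape $\int_0^\infty v(1+iv)^{n-1}e^{-2\pi m v}\,dv$. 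This is an Euler integral for $U$: after rotating the contour (legitimate because $\arg(-2\pi i m)=-\pi/2$ lies inside the sector of analyticity of $U$) one obtains $\int_0^\infty v(1+iv)^{n-1}e^{-2\pi m v}\,dv=-U(2,n+2,-2\pi i m)$. The upshot is an exact identity
\[
 g_n-\frac1n=-2\,\Rep\sum_{m\ge1}d(m)\,U(2,n+2,-2\pi i m)+(\text{elementary and parity terms}),
\]
the remaining terms being $-1/(n+1)$, $2b_n$, and a contribution from the endpoint of the index range.

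Next I would feed in the asymptotics of $U(2,n+2,z)$ as $n\to\infty$ with $z=-2\pi i m$ fixed. The essential feature is that this expansion splits into a factorially large algebraic part (a divergent $1/n$-series) and an exponentially small part of size $e^{-2\sqrt{-nz}}$. I would make both explicit through Tricomi's Bessel-function expansion, which represents $U(2,n+2,z)$ via modified Bessel functions $K_\nu(2\sqrt{nz})$ plus lower-order terms. Here $2\sqrt{-2\pi i m n}=2\sqrt{2\pi m n}\,e^{-i\pi/4}$, so the exponential factor is $e^{-2\sqrt{\pi m n}}\,e^{\,2i\sqrt{\pi m n}}$: the term $m=1$ dominates, and all $m\ge 2$ are negligible beyond every order of the $n^{-l/2}$ scale. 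The factorially large algebraic part must cancel against $2b_n-1/(n+1)$ and the parity term — it has to, since $g_n$ grows only polynomially — and I would verify this cancellation by matching the $1/n$-series of $U$ against the functional-equation form of $b_n$ written above.

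Finally, for the surviving $m=1$ Bessel term I would substitute the classical asymptotic series of $K_\nu$ at the argument $w=2\sqrt{2\pi n}\,e^{-i\pi/4}$. The powers $w^{-l}$ simultaneously produce the magnitude $n^{-l/2}$, the extra factor $(2\pi)^{-l/2}$, and the phase rotation $\pi l/4$, so that taking $\Rep$ converts $e^{(-2+2i)\sqrt{\pi n}}$ into $e^{-2\sqrt{\pi n}}\sin\!\left(2\sqrt{\pi n}+\pi l/4+3\pi/8\right)$, reproducing the shape of (\ref{eq:main}). The coefficients $\wC_l$ then emerge as rational combinations of the $K_\nu$-expansion coefficients and of the powers of $\pi$ contributed by the even zeta values, which places them in $\langle\pi^{2m}:0\le m\le l\rangle_{\Q}$; their nonvanishing I would read off from the explicit leading coefficient, matching $\wC_0$ against the Bettin--Conrey constant in (\ref{eq:BCasymptotic}).

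The hard part will be the middle step: cleanly separating the divergent algebraic part of $U$ from the exponentially small Bessel part and proving that the former cancels exactly, while controlling the $U$-asymptotics uniformly in $m$ and bounding the tail $\sum_{m\ge2}$ beyond all orders, so that interchanging the $m$-summation with the asymptotic expansion is justified. Everything else should be bookkeeping of constants and phases.
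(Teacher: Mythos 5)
Your reduction to the confluent hypergeometric function attaches the large parameter to the \emph{wrong slot}, and this is fatal. The identity $\int_0^{\infty}v(1+iv)^{n-1}e^{-2\pi mv}\,dv=-U(2;n+2;-2\pi i m)$ is correct, but when the second parameter exceeds the first by a positive integer the Euler integral (\ref{eq:defCHF2nd}) terminates: expanding $(1+t)^{n-1}$ by the binomial theorem gives the \emph{exact} rational function
\begin{equation*}
U(2;n+2;z)=\sum_{k=0}^{n-1}\binom{n-1}{k}\,(k+1)!\,z^{-k-2},
\end{equation*}
so your ``exact identity'' for $g_n-1/n$ is nothing but a re-indexing of (\ref{eq:gnexpr}) itself; it contains no new analytic information. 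In particular, the split you are counting on does not exist: the large-$b$ behavior of $U(a;b;z)$ in this regime is purely algebraic (factorially large), with \emph{no} exponentially small Bessel component. Tricomi's Bessel-type expansions, which do produce terms $K_{\nu}(2\sqrt{nz})$ of size $e^{-2\sqrt{\pi n}}$, belong to the large-\emph{first}-parameter regime; that is exactly the form the paper arranges, namely $g_n-\tfrac1n=-4(n-1)!\,\Rep\sum_m d(m)\,U(n;0;2\pi i m)$ (via (\ref{eq:BCexp}), Lemma \ref{Lem:InBessel}, Proposition \ref{Prop:KBLMT}, Corollary \ref{Cor:InCHF}), where each summand is already of modulus $\ll$ a power of $n$ times $e^{-2\sqrt{\pi mn}}$ and no cancellation has to be tracked at all.

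Even if you replace the nonexistent large-$b$ split by something genuine (e.g.\ Kummer's transformation turns $U(2;n+2;z)$ into $z^{-n-1}$ times a Laguerre-type polynomial of degree $n-1$, whose Plancherel--Rotach asymptotics on complex arguments do contain oscillatory/exponential pieces), your plan to ``verify that the factorially large part cancels by matching $1/n$-series'' cannot be made rigorous at the required scale. The algebraic part is a divergent asymptotic series: any truncation carries an error that is a negative power of $n$ times the factorially large prefactor $n!(2\pi m)^{-n-1}$, which is astronomically larger than the target $e^{-2\sqrt{\pi n}}$. The huge cancellation (this is precisely the loss-of-significance phenomenon the paper describes in \S\ref{sec:RNC}) must therefore be exhibited by \emph{exact identities}, not by asymptotic matching --- and producing such identities is tantamount to redoing the paper's contour-integral reduction. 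So the proposal, as it stands, has a genuine gap at its central step; the peripheral parts (functional-equation rewriting of $b_k$, dominance of $m=1$, phase bookkeeping giving $\sin(2\sqrt{\pi n}+\pi l/4+3\pi/8)$, and the structure of $\wC_l$ as rational combinations of Hankel symbols and powers of $\pi$) are consistent with the paper but cannot be reached along your route.
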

\begin{Remark}\label{Rmk:main}
\begin{enumerate}
\item The asymptotic expansion (\ref{eq:ZC}) holds with
\[
 C_k=2^{9/4}\pi^{3/4}(2\pi)^{-(k-3)/4}\wC_{(k-3)/2},\phantom{MM}
D_k=\frac{\pi k}{8}.
\]
\item The numbers $\wC_l$ are given
in terms of the Hankel
notation and the Bernoulli numbers.
See (\ref{eq:deftC}) for concrete expressions.
The first six $\wC_l$ are
\begin{gather*}
 \wC_0=1,\phantom{MM}\wC_1=\frac{\pi^2}{3}+\frac{3}{16},\phantom{MM}
\wC_2=\frac{\pi^4}{18}+\frac{5\pi^2}{16}-\frac{15}{512},\\
\wC_3=\frac{\pi^6}{162}+\frac{23\pi^4}{160}+\frac{35\pi^2}{512}+
\frac{105}{8192},\\
\wC_4=\frac{\pi^8}{1944}
+\frac{137\pi^6}{4320}+\frac{973\pi^4}{5120}
-\frac{105\pi^2}{8192}-\frac{4725}{524288},\\
\wC_5=\frac{\pi^{10}}{29160}+\frac{229\pi^8}{51840}
+\frac{17365\pi^6}{193536}
+\frac{2849\pi^4}{16384}+\frac{3465\pi^2}{524288}+
\frac{72765}{8388608}.
\end{gather*}
\item The numbers $\wC_l$ are not necessarily positive for general $l$.
The smallest $l$ at which $\wC_l<0$ is $l=50$.
\item To visualize Theorem \ref{Thm:main}, we illustrate
a graph of
\begin{equation}\label{eq:mainRmk}
\begin{aligned}
&n^{5/2}\left(2^{-9/4}\pi^{-3/4}n^{3/4}e^{2\sqrt{\pi n}}\left(g_n-\frac{1}{n}\right)\right.\\
&\left.-\sum_{l=0}^4(2\pi)^{-l/2}\wC_l n^{-l/2}\sin\left(2\sqrt{\pi n}+\frac{\pi l}{4}+\frac{\pi}{8}\right)\right)
\end{aligned}
\end{equation}
in Figure \ref{Fig:main2}.
Theorem \ref{Thm:main} with $L=5$
implies that this is equal to
\begin{equation}\label{eq:sin}
 (2\pi)^{-5/2}\wC_5\sin\left(2\sqrt{\pi n}+\frac{13\pi}{8}\right)
+O(n^{-1/2})
\end{equation}
as $n\to\infty$.
\end{enumerate}
\end{Remark}
\begin{figure}\label{Fig:main2}
\centering
\includegraphics[width=17cm]{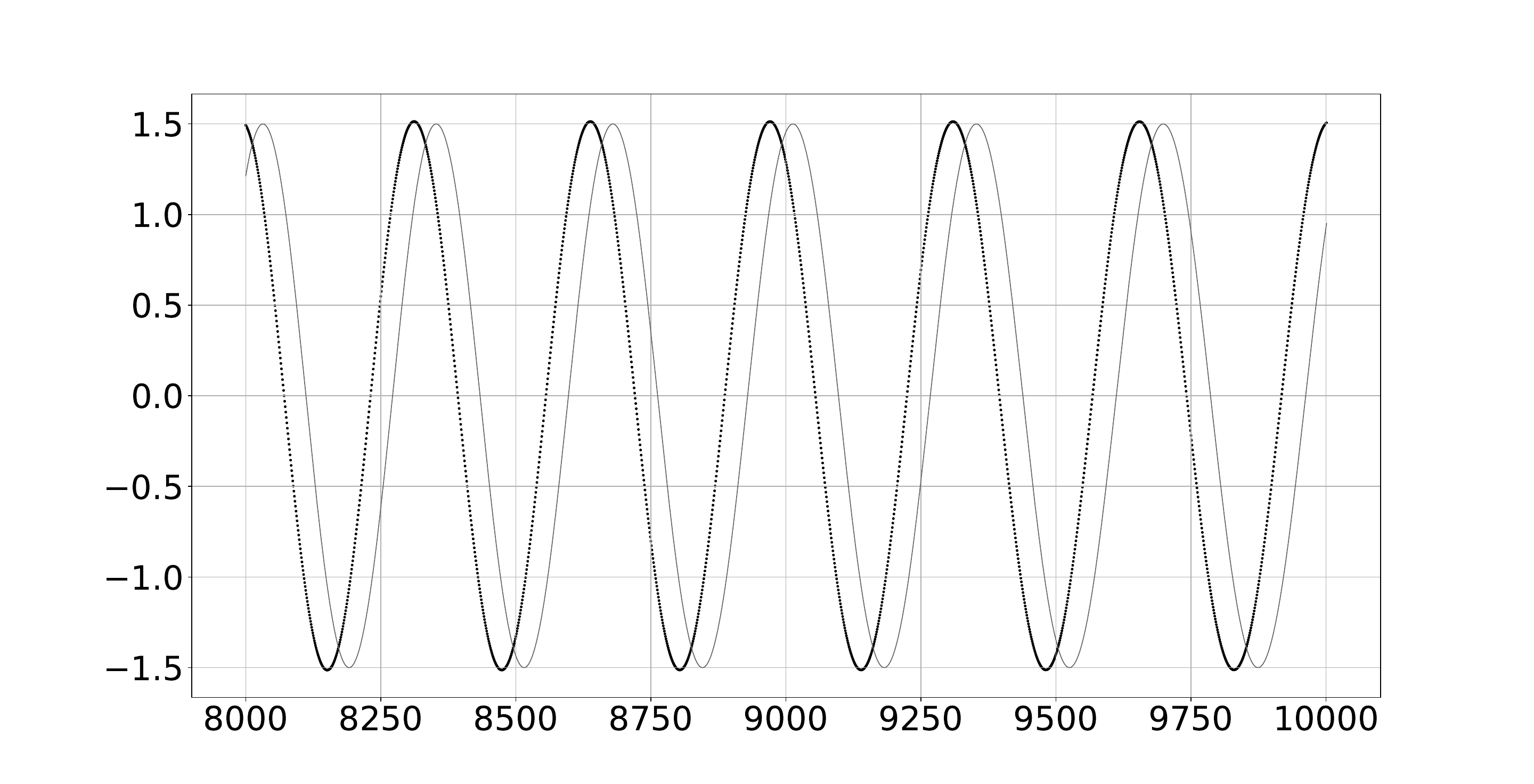}
\caption{The plot of (\ref{eq:mainRmk})
for $8000\leq n\leq 10001$.
The gray curve is the first term of (\ref{eq:sin}).
Here, $(2\pi)^{-5/2}\wC_5=1.49962\cdots$.}
\end{figure}
We explain our strategy to prove Theorem \ref{Thm:main}.
According to \cite{BeCo1}, the asymptotic analysis for $g_n-1/n$
reduces to the analysis for a certain integral.
In this paper we point out that the integral can be written
in terms of the confluent hypergeometric function of the second kind.
Applying the classical theory of large parameter problems
on the confluent hypergeometric function,
we obtain the asymptotic series expansion
(\ref{eq:main}).

This paper is organized as follows.
In \S \ref{sec:Bessel} we briefly review the definition
and properties of the $K$-Bessel function, which are frequently
used throughout this paper.
In \S \ref{sec:BCarg} we recall the discussion on $g(x)$ and $g_n$
by Bettin--Conrey \cite{BeCo1} with some additional explanations.
In \S \ref{sec:CHF} we see that an integral related to
the asymptotic behavior of $g_n-1/n$ can be expressed in terms of
the confluent hypergeometric function of the second kind.
In \S \ref{sec:PMT} we prove Theorem \ref{Thm:main} with
explicit expressions for $\wC_l$.
In \S \ref{sec:RNC} we explain numerical computations of $g_n$,
which are needed to create Figure \ref{Fig:main2}.
\subsection*{Notation and Conventions}
Throughout this paper we use the following notation and conventions:
\begin{itemize}
 \item
The $k^{\rm th}$ Bernoulli number $B_k$ is defined by
the generating function as
\[
 \frac{u}{e^u-1}=\sum_{k=0}^{\infty}\frac{B_k}{k!}u^k.
\]
 \item $\zeta(s)$ is the Riemann zeta-function.
 \item The number $\gamma$
is the Euler--Mascheroni constant, which is defined by
$\gamma=\lim_{k\to\infty}(\sum_{j=1}^k\frac{1}{j}-\log k)$.
 \item For $\alpha\in\C$ and $k\in\Z_{\geq 0}$
the Pochhammer symbol $(\alpha)_k$
is defined by
\[
 (\alpha)_k=\frac{\Gamma(\alpha+k)}{\Gamma(\alpha)}
=\begin{cases}
  1                                & \text{if $k=0$,}\\
  \alpha(\alpha+1)\cdots(\alpha+k-1) & \text{if $k\geq 1$.}
 \end{cases}
\]
\item For $\nu\in\C$ and $k\in\Z_{\geq 0}$ the Hankel symbol $(\nu,k)$
is defined by
\begin{align*}
 (\nu,k)
&=\frac{(-1)^k}{k!}\left(\tfrac{1}{2}-\nu\right)_k
\left(\tfrac{1}{2}+\nu\right)_k\\
&=\begin{cases}
   1 & \text{if $k=0$,}\\
\frac{1}{2^{2k}k!}\prod_{a=1}^k\left(4\nu^2-(2a-1)^2\right) & \text{if $k\geq 1$.}
 \end{cases}
\end{align*}
 \item For $a\in\R_{>0}$ and $w\in\C$ we determine the branch of
$a^w$ by $a^w=\exp(w\log a)$ with $\log a\in\R$
unless otherwise specified.
We also determine the branch of $(2\pi i)^w$
by $(2\pi i)^w=(2\pi)^w e^{\pi iw/2}$.
In particular, $(2\pi i)^{1/2}=\sqrt{2\pi i}=\sqrt{\pi}(1+i)$.
\item Let $S$ be a finite subset $S$ of $\C$.
Then the $\Q$-span of $S$, which is regarded as a $\Q$-subspace
of $\C$, is denoted by $\langle S\rangle_{\Q}$.
 \item Let $X$ be a set. Let $f$ be a complex-valued function
and $g$ be a positive-valued function on $X$.
We write $f(x)=O(g(x))$ or $f(x)\ll g(x)$ if there exists
$C>0$ such that $|f(x)|\leq Cg(x)$ for any $x\in X$.
When the constant $C$ may depend on a parameter $\alpha$,
we write $f(x)=O_{\alpha}(g(x))$
or $f(x)\ll_{\alpha}g(x)$.
For positive-valued functions $f$ and $g$
we write $f(x)\asymp g(x)$
if $f(x)\ll g(x)$ and $g(x)\ll f(x)$ are satisfied.
\end{itemize}
\subsection*{Acknowledgments}
The first author was supported by JSPS KAKENHI Grant Number JP1903392.
The second author was supported by JSPS KAKENHI Grant Number JP23KJ1675.
This work was also supported by
the Research Institute for Mathematical Sciences,
an International Joint Usage/Research Center located in Kyoto University.
We would like to thank Toshiki Matsusaka and Takuya Yamauchi for helpful comments
and discussions.
\section{The $K$-Bessel functions}\label{sec:Bessel}
In this section we recall the $K$-Bessel function.
Readers may proceed to the next section
and go back to this section as necessary.

For $\nu\in\C$ the $I$-Bessel function $I_{\nu}(z)$ is defined by
\begin{equation}\label{eq:defIBessel}
 I_{\nu}(z)
=\sum_{k=0}^{\infty}\frac{(z/2)^{\nu+2k}}{k!\Gamma(k+\nu+1)},
\end{equation}
where $\arg(z/2)\in(-\pi,\pi)$.
By the ratio test, the radius of convergence is $\infty$.
Therefore, $I_{\nu}(z)$ is a single-valued holomorphic function in
$z\in\C\setminus\R_{\leq 0}$.
For $\nu\in\C\setminus\Z$ the $K$-Bessel function $K_{\nu}(z)$
is defined by
\[
 K_{\nu}(z)=\frac{\pi}{2}\frac{I_{-\nu}(z)-I_{\nu}(z)}{\sin(\pi\nu)}.
\]
For $n\in\Z$ we define $K_n(z)$ by
\[
 K_n(z)=\lim_{\nu\to n}K_{\nu}(z).
\]
By definition we easily see
\begin{equation}\label{eq:KBesselef}
 K_{\nu}(z)=K_{-\nu}(z)
\end{equation}
for $\nu\in\C$ and $z\in\C\setminus\R_{\leq 0}$.
If $\nu\in\R$, then $I_{\nu}(z)$ is real on $z\in\R_{>0}$,
so that $K_{\nu}(z)$ is real on $z\in\R_{>0}$.
In other words,
\begin{equation}\label{eq:RPBessel}
 \overline{K_{\nu}(\overline{z})}=K_{\nu}(z)
\end{equation}
holds for $\nu\in\R$ and $z\in\C\setminus\R_{\leq 0}$.

We recall bounds and asymptotic formulas for $K_{\nu}(z)$.
We know
\begin{equation}\label{eq:Knuto0}
 K_{\nu}(z)\ll_{\nu,\delta}
\begin{cases}
 |z|^{-|\Rep(\nu)|} & \text{if $\nu\in\C\setminus\{0\}$,}\\
 \log(2/|z|)        & \text{if $\nu=0$}
\end{cases}
\end{equation}
on $|z|\leq 1$ with $|\arg z|\leq\pi-\delta$ for any fixed $\delta>0$.
This is immediate by definition when $\nu\in\C\setminus\Z$.
In the case $\nu\in\Z$ see \cite[eq.~(5.7.12) in p.111]{Leb}.
On the other hand, $K_{\nu}(z)$ has the following asymptotic formula
as $|z|\to\infty$ on $|\arg z|\leq\pi-\delta$ for each
$\delta>0$ and $J\in\Z_{\geq 0}$:
\begin{equation}\label{eq:Knutoinf}
 K_\nu(z)=
\left(
\frac{\pi}{2z}
\right)^{1/2}e^{-z}
\left(
\sum_{j=0}^J(\nu,j)(2z)^{-j}+O_{\nu,\delta,J}\left(|z|^{-J-1}\right)
\right).
\end{equation}
See \cite[eq.~(5.11.9) in p.123]{Leb}.

Next we recall integral expressions for $K_{\nu}(z)$.
When $\Rep(\nu)>-1/2$,
we have
\begin{equation}\label{eq:K1IE}
 K_{\nu}(z)=\frac{\sqrt{\pi}}{\Gamma\left(\nu+\frac{1}{2}\right)}
\left(\frac{z}{2}\right)^{\nu}
\int_1^{\infty}e^{-zt}(t^2-1)^{\nu-\frac{1}{2}}dt
\end{equation}
in $|\arg z|<\pi/2$:
see \cite[eq.~(5.10.24) in p.119]{Leb}.
For $\nu\in\C$, the function $K_{\nu}(z)$ has another integral expression:
\begin{equation}\label{eq:KnuIE-2}
 K_{\nu}(z)=\frac{1}{2}\left(\frac{z}{2}\right)^{\nu}
\int_{0}^{\infty}e^{-u-\frac{z^2}{4u}}u^{-\nu-1}du
\end{equation}
in $\{z\in\C\setminus\{0\}:|\arg z|<\pi/4\}$.
See \cite[eq.~(5.10.25) in p.119]{Leb}.
When $\Rep(\nu)<0$, the integral converges absolutely and uniformly
on $\{z\in\C\setminus\{0\}:|\arg z|\leq\pi/4\}$,
so that the formula (\ref{eq:KnuIE-2})
remains true on $\{z\in\C\setminus\{0\}:\arg z=\pm\pi/4\}$
by continuity.

We recall the Mellin transform for $K_{\nu}(z)$.
According to \cite[formula (26) in p.331]{ErdTIT1}
we have
\begin{equation}\label{eq:MKBF}
 \int_0^{\infty}K_{\nu}(u)u^{s}\frac{du}{u}
=2^{s-2}
\Gamma\left(\frac{s+\nu}{2}\right)\Gamma\left(\frac{s-\nu}{2}\right)
\end{equation}
in $\Rep(s)>|\Rep(\nu)|$.
We give a sketch of the proof here.
We may assume $\Rep(\nu)>-1/2$ without loss of generality
because of (\ref{eq:KBesselef}).
We insert (\ref{eq:K1IE}) into the left-hand side of (\ref{eq:MKBF})
and apply Fubini's theorem.
Then by the formula between the beta function and the gamma function
together with the duplication formula for the gamma function
we reach (\ref{eq:MKBF}).

In \S \ref{sec:BCarg} we will use (\ref{eq:MKBF}) as the following form:
\begin{Lemma}\label{Lem:MKBF2}
Let $\nu\in\C$. Then in $\Rep(s)>(|\Rep(\nu)|-1)/2$
we have
\begin{equation}\label{eq:MKBF2}
\int_0^{\infty}
 2(\pm iu)^{1/2}K_{\nu}(2(\pm iu)^{1/2})u^s\frac{du}{u}\\
=\Gamma\left(s+\frac{1+\nu}{2}\right)
\Gamma\left(s+\frac{1-\nu}{2}\right)e^{\mp\pi is/2},
\end{equation}
where $(\pm iu)^{1/2}=e^{\pm\pi i/4}u^{1/2}$.
\end{Lemma}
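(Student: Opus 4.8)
The plan is to reduce the integral in (\ref{eq:MKBF2}) to the Mellin transform (\ref{eq:MKBF}) by a change of variables, paying close attention to the branch of $(\pm iu)^{1/2}$ and to a contour rotation. Write $I_\pm$ for the left-hand side of (\ref{eq:MKBF2}), regarded as an integral over $u\in(0,\infty)$. First I would substitute $v=2(\pm iu)^{1/2}=2e^{\pm\pi i/4}u^{1/2}$, so that $u=e^{\mp\pi i/2}v^2/4$ and $2(\pm iu)^{1/2}=v$. As $u$ runs over $(0,\infty)$ the new variable $v$ runs over the ray $L_\pm=\{re^{\pm\pi i/4}:r>0\}$. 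Bookkeeping of the powers of $v$ and of the accumulated phase factors turns $I_\pm$ into
\[
I_\pm=\frac{e^{\mp\pi i s/2}}{2^{2s-1}}\int_{L_\pm}K_\nu(v)v^{2s}\,dv,
\]
where the phase $e^{\mp\pi i s/2}$ is precisely the factor appearing on the right-hand side of (\ref{eq:MKBF2}).

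The heart of the argument is then to rotate the contour $L_\pm$ back to the positive real axis, i.e.\ to establish $\int_{L_\pm}K_\nu(v)v^{2s}\,dv=\int_0^\infty K_\nu(v)v^{2s}\,dv$. Since $K_\nu$ is holomorphic on $\C\setminus\R_{\leq 0}$, Cauchy's theorem applied to the sector bounded by $L_\pm$ and $\R_{>0}$ reduces this to checking that the circular arcs of radii $\varepsilon\to 0$ and $R\to\infty$ contribute nothing. Near the origin I would invoke (\ref{eq:Knuto0}): there $v^{2s}K_\nu(v)\ll|v|^{2\Rep(s)-|\Rep(\nu)|}$ (with the harmless logarithmic modification when $\nu=0$), so the small arc, whose length is $\asymp\varepsilon$, vanishes exactly when $\Rep(s)>(|\Rep(\nu)|-1)/2$, which is the range asserted in the lemma. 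Near infinity I would use the asymptotic (\ref{eq:Knutoinf}); on the closed sector $0\leq|\arg v|\leq\pi/4$ one has $\Rep(v)=|v|\cos(\arg v)\geq|v|/\sqrt{2}$, so the factor $e^{-v}$ forces exponential decay and the large arc vanishes for every $s$. The same two estimates show that all integrals in sight converge absolutely in the stated region, so the manipulations are legitimate.

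Finally I would apply (\ref{eq:MKBF}) with $s$ replaced by $2s+1$, giving
\[
\int_0^\infty K_\nu(v)v^{2s}\,dv=\int_0^\infty K_\nu(v)v^{2s+1}\frac{dv}{v}=2^{2s-1}\Gamma\!\left(s+\tfrac{1+\nu}{2}\right)\Gamma\!\left(s+\tfrac{1-\nu}{2}\right),
\]
whose region of validity $\Rep(2s+1)>|\Rep(\nu)|$ is again $\Rep(s)>(|\Rep(\nu)|-1)/2$. Substituting this into the expression for $I_\pm$, the powers of $2$ cancel and we are left with $e^{\mp\pi i s/2}\Gamma(s+\frac{1+\nu}{2})\Gamma(s+\frac{1-\nu}{2})$, as claimed. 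I expect the main obstacle to be the honest justification of the contour rotation---in particular verifying that it is the small-arc estimate that produces the exact half-plane $\Rep(s)>(|\Rep(\nu)|-1)/2$---rather than the elementary algebra, which is routine once the branch of $(\pm iu)^{1/2}$ and the phase $e^{\mp\pi i s/2}$ are tracked carefully.
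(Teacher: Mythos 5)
Your proposal is correct and follows essentially the same route as the paper: both apply (\ref{eq:MKBF}) with $s$ replaced by $2s+1$, perform the quadratic change of variables relating $u$ to the Bessel argument, and justify a contour rotation through the sector of opening $\pi/4$ by Cauchy's theorem using the bounds (\ref{eq:Knuto0}) and (\ref{eq:Knutoinf}); the only difference is direction (the paper rotates the real-axis integral onto the ray $(0,\pm i\infty)$ in the $u$-variable, while you rotate the ray $L_\pm$ back to $\R_{>0}$ in the $v$-variable, which is the same deformation read in reverse). Your explicit small-arc and large-arc estimates merely spell out what the paper leaves implicit.
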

\begin{proof}
We replace $s$ with $2s+1$ on (\ref{eq:MKBF}).
Then we change the variable $u$ by $2u^{1/2}$.
Consequently we have
\[
 \int_0^{\infty}2u^{1/2}K_{\nu}(2u^{1/2})u^s\frac{du}{u}
=\Gamma\left(s+\frac{1+\nu}{2}\right)
\Gamma\left(s+\frac{1-\nu}{2}\right)
\]
in $\Rep(s)>(|\Rep(\nu)|-1)/2$.
Considering (\ref{eq:Knuto0}) and (\ref{eq:Knutoinf}) into account,
we apply the Cauchy theorem, so that
\begin{align*}
 \int_0^{\infty}2u^{1/2}K_{\nu}(2u^{1/2})u^s\frac{du}{u}
&=\int_0^{\pm i\infty}2u^{1/2}K_{\nu}(2u^{1/2})u^s\frac{du}{u}\\
&=e^{\pm \pi is/2}
\int_0^{\infty}2(\pm iu)^{1/2}K_{\nu}(2(\pm iu)^{1/2})u^s\frac{du}{u}.
\end{align*}
These complete the proof.
\end{proof}
 \section{The discussion on $g(x)$ and $g_n$ by Bettin--Conrey}\label{sec:BCarg}
In this section we recall the treatment of $g(x)$ and $g_n$,
which are defined by (\ref{eq:funcg}) and (\ref{eq:gTC}) respectively,
by Bettin and Conrey \cite[\S 5]{BeCo1}.
We start with the fifth displayed formula in \cite[p.5722]{BeCo1}.
Namely,
\begin{equation}\label{eq:BCexp}
 \pi g(1+x)=-\log(2\pi(1+x))
 +\gamma-i\int_{\frac{3}{2}-i\infty}^{\frac{3}{2}+i\infty}
\frac{\zeta(s)\zeta(1-s)}{\sin(\pi s)}(1+x)^{s}ds
\end{equation}
holds.
The integral converges absolutely and locally uniformly in $x>-1$.
To check the convergence, we use the following functional equation
(see \cite[Exercise 8 in p.336]{MoVa})
\begin{equation}\label{eq:RZfteq}
 \zeta(1-s)=2(2\pi)^{-s}\Gamma(s)\cos(\pi s/2)\zeta(s).
\end{equation}
For any fixed $A$, $B$ with $0<A<B$ we have
\begin{equation}\label{eq:AppStirling}
 \Gamma(x+iy)\ll_{A,B}e^{-\pi|y|/2}(|y|+1)^{x-\frac{1}{2}}
\end{equation}
uniformly on $\{x+iy:A\leq x\leq B,~y\in\R\}$,
which follows from the Stirling formula
(see \cite[Problem 8 in p.15]{Leb}
or \cite[eq.~(C.19) in p.523]{MoVa}).
Applying (\ref{eq:AppStirling}) and $\cos(\pi s/2)\ll e^{\pi|t|/2}$
to (\ref{eq:RZfteq}),
we find $\zeta(1-s)\ll |t|+1$
for $s=\frac{3}{2}+it$ with $t\in\R$.
We easily see $\zeta(s)\ll 1$ and $1/\sin(\pi s)\ll e^{-\pi|t|}$
for $s=\frac{3}{2}+it$ with $t\in\R$.
Consequently we obtain
\begin{equation}\label{eq:intest32}
 \frac{\zeta(s)\zeta(1-s)}{\sin(\pi s)}\ll e^{-\pi|t|}(|t|+1)
\end{equation}
for $s=\frac{3}{2}+it$ with $t\in\R$,
which imply the absolute and locally uniform convergence of the integral.

Inserting the Taylor expansions for $\log(1+x)$ and $(1+x)^s$ at $x=0$
into (\ref{eq:BCexp}),
we find
\begin{equation}\label{eq:gTE}
\begin{aligned}
 \pi g(1+x)&=-\log(2\pi)+\gamma
 -i\int_{\frac{3}{2}-i\infty}^{\frac{3}{2}+i\infty}
 \frac{\zeta(s)\zeta(1-s)}{\sin(\pi s)}ds\\
 &+\sum_{n=1}^{\infty}
 \left(
   \frac{(-1)^n}{n}
   -i\int_{\frac{3}{2}-i\infty}^{\frac{3}{2}+i\infty}
     \frac{\zeta(s)\zeta(1-s)}{\sin(\pi s)}\binom{s}{n}ds
 \right)
x^n
\end{aligned}
\end{equation}
in $|x|<1$.
Here we interchanged the sum and the integral.
This can be justified as follows.
It suffices to show
\begin{equation}\label{eq:absconvSI}
 \sum_{n=0}^{\infty}
\int_{\frac{3}{2}-i\infty}^{\frac{3}{2}+i\infty}
\left|
\frac{\zeta(s)\zeta(1-s)}{\sin(\pi s)}\binom{s}{n}
\right|
|ds|
|x|^n<\infty
\end{equation}
in $|x|<1$.
Let $n\in\Z_{\geq 0}$
and $s=\frac{3}{2}+it$ with $t\in\R$.
To estimate the binomial coefficients, we take arbitrary
$\varepsilon\in(0,1/100)$. Then Cauchy's integral
formula gives
\[
 \binom{s}{n}
=\frac{1}{2\pi i}\int_{|w|=1-\varepsilon}\frac{(1+w)^{s}}{w^{n+1}}dw,
\]
where $\arg(1+w)\in(-\pi/2,\pi/2)$.
On $|w|=1-\varepsilon$ we notice
$|(1+w)^s|=|1+w|^{3/2}e^{-t\arg(1+w)}\leq 2^{3/2}e^{\pi|t|/2}$.
Thus we have
\[
 \left|\binom{s}{n}\right|
\leq
\frac{1}{2\pi}
\int_{|w|=1-\varepsilon}\frac{2^{3/2}e^{\pi|t|/2}}{(1-\varepsilon)^{n+1}}|dw|
=\frac{2^{3/2}e^{\pi|t|/2}}{(1-\varepsilon)^n}.
\]
Since $\varepsilon\in(0,1/100)$ is arbitrary, we reach
\[
 \left|\binom{s}{n}\right|\leq 2^{3/2}e^{\pi|t|/2}.
\]
Combining this with (\ref{eq:intest32}),
we see
\[
\int_{\frac{3}{2}-i\infty}^{\frac{3}{2}+i\infty}
\left|
\frac{\zeta(s)\zeta(1-s)}{\sin(\pi s)}\binom{s}{n}
\right||ds|
\ll\int_{-\infty}^{\infty}e^{-\pi|t|/2}(|t|+1)dt
\ll 1.
\]
This implies (\ref{eq:absconvSI}) in $|x|<1$,
so that the interchange of the sum and the integral is permitted
in $|x|<1$.

We go back to (\ref{eq:gTE}).
We notice
\[
 (-1)^n\binom{s}{n}=\frac{1}{n!}\frac{\Gamma(n-s)}{\Gamma(-s)}.
\]
Thus, for any $n\in\Z_{\geq 1}$ we have\footnote{%
There is a misprint in \cite[p.5722]{BeCo1}.
That is, on the second displayed formula from the bottom,
$\frac{1}{n}-I_n$ should be $\frac{1}{n}+I_n$.
}
\[
 g_n=\frac{1}{n}+\mathcal{I}_n,
\]
where
\[
 \mathcal{I}_n=-\frac{i}{n!}
 \int_{\frac{3}{2}-i\infty}^{\frac{3}{2}+i\infty}
\frac{\zeta(s)\zeta(1-s)}{\sin(\pi s)}\frac{\Gamma(n-s)}{\Gamma(-s)}ds.
\]
By the reflection formula $1/\sin(\pi s)=\Gamma(s)\Gamma(1-s)/\pi$
and (\ref{eq:RZfteq}) we have
\[
 \mathcal{I}_n=\frac{2}{n!\pi i}
\int_{\frac{3}{2}-i\infty}^{\frac{3}{2}+i\infty}
(2\pi)^{-s}\Gamma(s)^2\Gamma(n-s)\cos\left(\frac{\pi s}{2}\right)
\zeta(s)^2\frac{\Gamma(1-s)}{\Gamma(-s)}ds.
\]
We notice
\[
 \Gamma(s)\frac{\Gamma(1-s)}{\Gamma(-s)}
 =\Gamma(s)\times(-s)=-\Gamma(s+1).
\]
This gives
\[
 \mathcal{I}_n=-\frac{2}{n!\pi i}
\int_{\frac{3}{2}-i\infty}^{\frac{3}{2}+i\infty}
(2\pi)^{-s}\Gamma(s)\Gamma(s+1)\Gamma(n-s)\cos\left(\frac{\pi s}{2}\right)
\zeta(s)^2ds.
\]
We insert $\zeta(s)^2=\sum_{m=1}^{\infty}d(m)m^{-s}$ in $\Rep(s)>1$.
We interchange the sum over $m$ and the integral on $\Rep(s)=3/2$.
This can be justified by
$\Gamma(s)\Gamma(s+1)\Gamma(n-s)\cos(\pi s/2)\ll_n e^{-\pi|t|}(|t|+1)^{n+1}$
on $s=\frac{3}{2}+it$ with $t\in\R$ for fixed $n$,
which follows from (\ref{eq:AppStirling}) and $\cos(\pi s/2)\ll e^{\pi|t|/2}$.
In consequence we obtain
\begin{equation}\label{eq:InSE}
 \mathcal{I}_n=-\frac{4}{n!}\sum_{m=1}^{\infty}d(m)\mathcal{I}_n(m),
\end{equation}
where
\[
 \mathcal{I}_n(x)=\frac{1}{2\pi i}
 \int_{\frac{3}{2}-i\infty}^{\frac{3}{2}+i\infty}
 (2\pi x)^{-s}\Gamma(s)\Gamma(s+1)\Gamma(n-s)\cos\left(\frac{\pi s}{2}\right)
 ds.
\]

Bettin and Conrey connected $\mathcal{I}_n(x)$ 
with the $K$-Bessel function $K_1(z)$ to
investigate its behavior.
The corrected version of their formula is
(see fifth displayed formula in \cite[p.5723]{BeCo1})
\begin{Lemma}\label{Lem:InBessel}
For any $n\in\Z_{\geq 2}$ and $x\in\R_{>0}$ we have
\begin{equation}\label{eq:InBessel}
 \mathcal{I}_n(x)
=2x^{-n}
\Rep\left(
\sqrt{2\pi i}\int_0^{\infty}
e^{-u/x}K_1(2\sqrt{2\pi iu})u^{n-\frac{1}{2}}du
\right).
\end{equation}
\end{Lemma}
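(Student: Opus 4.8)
The plan is to recognize $\mathcal{I}_n(x)$ as an inverse Mellin transform and to convert the Barnes-type integral into the $K$-Bessel integral by inserting the Mellin representation from Lemma~\ref{Lem:MKBF2} and interchanging the order of integration. First I would write $\cos(\pi s/2)=\tfrac12(e^{\pi is/2}+e^{-\pi is/2})$ and treat the two resulting half-integrals $\mathcal{I}_n^{\pm}(x)$ separately. Applying Lemma~\ref{Lem:MKBF2} with $\nu=1$, whose right-hand side becomes $\Gamma(s+1)\Gamma(s)e^{\mp\pi is/2}$ and is valid for $\Rep(s)>0$ and in particular on the line $\Rep(s)=\tfrac32$, the factor $e^{-\pi is/2}$ is exactly the one paired with the integral featuring $(+iu)^{1/2}$, and $e^{+\pi is/2}$ with the one featuring $(-iu)^{1/2}$. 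This lets me replace $\Gamma(s)\Gamma(s+1)e^{\mp\pi is/2}$ inside the contour integral by $\int_0^\infty 2(\pm iu)^{1/2}K_1(2(\pm iu)^{1/2})u^{s}\,\tfrac{du}{u}$.

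The heart of the computation is to interchange the $s$-integral over $\Rep(s)=\tfrac32$ with the $u$-integral over $(0,\infty)$ and then evaluate the inner $s$-integral. After the interchange the inner integral is
\[
\frac{1}{2\pi i}\int_{\frac32-i\infty}^{\frac32+i\infty}\Gamma(n-s)\Big(\frac{u}{2\pi x}\Big)^{s}\,ds,
\]
which I would compute by the change of variable $w=n-s$ followed by the Cahen--Mellin integral $\frac{1}{2\pi i}\int_{(c)}\Gamma(w)y^{-w}\,dw=e^{-y}$ (valid for $c>0$; here $c=n-\tfrac32\ge\tfrac12>0$ and $y=u/(2\pi x)>0$). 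This yields $(u/(2\pi x))^{n}e^{-u/(2\pi x)}$. Substituting back and rescaling via $u\mapsto 2\pi u$, and using $2(2\pi iu)^{1/2}u^{n}\,\tfrac{du}{u}=2\sqrt{2\pi i}\,u^{n-1/2}\,du$, brings the $+$ half-integral into the form $x^{-n}\sqrt{2\pi i}\int_0^\infty e^{-u/x}K_1(2\sqrt{2\pi iu})u^{n-1/2}\,du$, while the $-$ half-integral is the same expression with $(2\pi iu)^{1/2}$ replaced by its complex conjugate.

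To finish I would use the reality relation $\overline{K_\nu(\overline z)}=K_\nu(z)$ from (\ref{eq:RPBessel}) together with $\overline{(iu)^{1/2}}=(-iu)^{1/2}$ for $u>0$ to conclude that the $(-iu)$-integrand is the complex conjugate of the $(+iu)$-integrand, all the remaining factors $e^{-u/x}$, $u^{n-1/2}$, $x^{-n}$ being real. Hence $\mathcal{I}_n^-(x)=\overline{\mathcal{I}_n^+(x)}$, so that $\mathcal{I}_n(x)=\mathcal{I}_n^+(x)+\mathcal{I}_n^-(x)=2\Rep\,\mathcal{I}_n^+(x)=2x^{-n}\Rep\big(\sqrt{2\pi i}\int_0^\infty e^{-u/x}K_1(2\sqrt{2\pi iu})u^{n-1/2}\,du\big)$, which is precisely (\ref{eq:InBessel}).

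The \emph{main obstacle} is the justification of Fubini's theorem for the interchange, and this is where the bounds from \S\ref{sec:Bessel} enter. Writing $s=\tfrac32+it$, the modulus of the double integrand factors (through $|u^{s}|=u^{3/2}$ and $|(2\pi x)^{-s}|=(2\pi x)^{-3/2}$) into a product of a $u$-part and a $t$-part. For the $u$-integral I would use (\ref{eq:Knuto0}) with $\nu=1$ near $u=0$, which makes the Bessel factor bounded, and (\ref{eq:Knutoinf}) for large $u$, where $\Rep\big(2(iu)^{1/2}\big)=\sqrt2\,u^{1/2}$ gives decay of order $u^{1/4}e^{-\sqrt2\,u^{1/2}}$; for the $t$-integral I would use the Stirling bound (\ref{eq:AppStirling}) with $\Rep(n-s)=n-\tfrac32$ fixed to obtain $|\Gamma(n-s)|\ll e^{-\pi|t|/2}(|t|+1)^{n-2}$, which is integrable. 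Together these yield absolute convergence of the double integral and legitimize the interchange.
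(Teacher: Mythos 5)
Your proposal is correct and takes essentially the same route as the paper's own proof: splitting $\cos(\pi s/2)$ into the two exponentials, inserting the Mellin--Bessel representation of Lemma \ref{Lem:MKBF2} with $\nu=1$, justifying the interchange via (\ref{eq:Knuto0}), (\ref{eq:Knutoinf}) and the Stirling bound (\ref{eq:AppStirling}), evaluating the inner $s$-integral (your Cahen--Mellin step is the same computation the paper performs by shifting the contour to $\Rep(s)=+\infty$), and finishing with the conjugation symmetry (\ref{eq:RPBessel}). The only discrepancy is a harmless labeling slip in which of $\mathcal{I}_n^{\pm}$ carries the $(+iu)^{1/2}$ kernel, which does not affect the conclusion since the two half-integrals are complex conjugates.
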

\begin{proof}
Keeping $\cos(\pi s/2)=(e^{\pi is/2}+e^{-\pi is/2})/2$ in mind,
we firstly show
\begin{equation}\label{eq:InBessel1}
\begin{aligned}
&\frac{1}{2\pi i}\int_{\frac{3}{2}-i\infty}^{\frac{3}{2}+i\infty}
\Gamma(s)\Gamma(s+1)\Gamma(n-s)e^{\pm\pi is/2}y^{-s}ds\\
&=y^{-n}\int_0^{\infty}2e^{-u/y}(\mp iu)^{1/2}K_1(2(\mp iu)^{1/2})u^n\frac{du}{u}
\end{aligned}
\end{equation}
for any $n\in\Z_{\geq 2}$ and $y\in\R_{>0}$.
We see from Lemma \ref{Lem:MKBF2} with $\nu=1$ that
the integral on the left is
\begin{equation}\label{eq:InBessel2}
 \begin{aligned}
  &\frac{1}{2\pi i}\int_{\frac{3}{2}-i\infty}^{\frac{3}{2}+i\infty}
\Gamma(s)\Gamma(s+1)\Gamma(n-s)e^{\pm\pi is/2}y^{-s}ds\\
&=\frac{1}{2\pi i}
\int_{\frac{3}{2}-i\infty}^{\frac{3}{2}+i\infty}
\left(
\int_0^{\infty}2(\mp iu)^{1/2}K_1(2(\mp iu)^{1/2})u^s\frac{du}{u}
\right)
\Gamma(n-s)y^{-s}ds.
 \end{aligned}
\end{equation}
The asymptotic formulas (\ref{eq:Knuto0}) and (\ref{eq:Knutoinf})
with $\nu=1$ give
\[
 2(\mp iu)^{1/2}K_1(2(\mp iu)^{1/2})u^{s-1}\ll
\begin{cases}
 u^{1/2} & \text{if $0<u\leq 1$,}\\
 u^{3/4}e^{-\sqrt{2u}} & \text{if $u\geq 1$,}
\end{cases}
\]
uniformly on $s=\frac{3}{2}+it$ with $t\in\R$.
From the bound (\ref{eq:AppStirling}) we see
$\Gamma(n-s)y^{-s}\ll_{n,y}e^{-\pi|t|/2}(|t|+1)^{n-2}$.
These imply that the integral on the right-hand side
of (\ref{eq:InBessel2}) is absolutely convergent.
In consequence, by Fubini's theorem (\ref{eq:InBessel2}) equals
\begin{equation}\label{eq:InBessel3}
 =\int_0^{\infty}2(\mp iu)^{1/2}K_1(2(\mp iu)^{1/2})
\left(
\frac{1}{2\pi i}\int_{\frac{3}{2}-i\infty}^{\frac{3}{2}+i\infty}
\Gamma(n-s)\left(\frac{y}{u}\right)^{-s}ds
\right)
\frac{du}{u}.
\end{equation}
We know
\begin{equation}\label{eq:InBessel4}
 \frac{1}{2\pi i}\int_{\frac{3}{2}-i\infty}^{\frac{3}{2}+i\infty}
\Gamma(n-s)v^{-s}ds=v^{-n}e^{-1/v}
\end{equation}
for any $n\in\Z_{\geq 2}$ and $v\in\R_{>0}$.
This can be easily checked by
moving the contour $\Rep(s)=3/2$ to $\Rep(s)=+\infty$.
Inserting (\ref{eq:InBessel4}) with $v=y/u$ into (\ref{eq:InBessel3}),
we obtain (\ref{eq:InBessel1}).

We replace $y=2\pi x$ on (\ref{eq:InBessel1}) and change the variable
$u\mapsto 2\pi u$. By (\ref{eq:RPBessel}) with $\nu=1$ we notice
\begin{align*}
&(2\pi iu)^{1/2}K_1(2(2\pi iu)^{1/2})
+(-2\pi iu)^{1/2}K_1(2(-2\pi iu)^{1/2})\\
&=2\Rep\left((2\pi iu)^{1/2}K_1(2(2\pi iu)^{1/2})\right)
\end{align*}
for any $u\in\R_{>0}$.
Combining this with $e^{\pi is/2}+e^{-\pi is/2}=2\cos(\pi s/2)$,
we obtain the claimed formula (\ref{eq:InBessel}).
\end{proof}
Next we confirm that the sum over $m\geq 2$ on (\ref{eq:InSE})
can be negligible in the asymptotic expansion for $\mathcal{I}_n$.
Strictly speaking, we shall show
\begin{Lemma}\label{Lem:Inest}
 As $n\to\infty$ we have
\[
 \mathcal{I}_n=-\frac{4}{n!}\mathcal{I}_n(1)
+O\left(n^{-3/4}e^{-2\sqrt{8\pi n/5}}\right).
\]
\end{Lemma}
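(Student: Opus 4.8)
The plan is to start from the expansion (\ref{eq:InSE}), which rewrites the assertion as an estimate for a tail sum. Since $d(1)=1$, we have
\[
\mathcal{I}_n+\frac{4}{n!}\mathcal{I}_n(1)=-\frac{4}{n!}\sum_{m=2}^{\infty}d(m)\mathcal{I}_n(m),
\]
so it suffices to show that the right-hand side is $O(n^{-3/4}e^{-2\sqrt{8\pi n/5}})$. For each $m\geq 2$ I would insert the integral representation of Lemma \ref{Lem:InBessel} into the triangle inequality, obtaining $|\mathcal{I}_n(m)|\leq 2\sqrt{2\pi}\,m^{-n}\int_0^{\infty}e^{-u/m}|K_1(2\sqrt{2\pi iu})|u^{n-1/2}\,du$. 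The prefactor $m^{-n}$ is what will eventually tame the large-$m$ range, whereas the size of the integral in the small-$m$ range must be extracted by a saddle analysis.

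Next I would bound the Bessel factor. The branch convention gives $2\sqrt{2\pi iu}=2\sqrt{\pi u}\,(1+i)$, so $\Rep(2\sqrt{2\pi iu})=2\sqrt{\pi u}$, and the estimates (\ref{eq:Knuto0}) and (\ref{eq:Knutoinf}) with $\nu=1$ give $|K_1(2\sqrt{2\pi iu})|\ll u^{-1/2}$ for $0<u\leq 1$ and $|K_1(2\sqrt{2\pi iu})|\ll u^{-1/4}e^{-2\sqrt{\pi u}}$ for $u\geq 1$. The range $0<u\leq 1$ contributes at most $\ll m^{-n}/(n\cdot n!)$, which is negligible even after summing against $d(m)$. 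Thus the problem reduces to estimating
\[
H_m:=\frac{m^{-n}}{n!}\int_1^{\infty}e^{-u/m-2\sqrt{\pi u}}u^{n-3/4}\,du
\]
and proving $\sum_{m\geq2}d(m)H_m\ll n^{-3/4}e^{-2\sqrt{8\pi n/5}}$.

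The heart of the argument is a Laplace-type analysis of $H_m$. Writing the integrand as $e^{\varphi_m(u)}u^{-3/4}$ with $\varphi_m(u)=-u/m-2\sqrt{\pi u}+n\log u$, the maximum occurs where $u/m+\sqrt{\pi u}=n$, i.e.\ at $u_m^{*}=mn-m\sqrt{\pi mn}+\cdots$. Combining $e^{\varphi_m(u_m^{*})}$ with the prefactor $m^{-n}/n!$ and Stirling's formula for $n!$ produces, in the range $2\leq m\leq n/\pi$ where the saddle is interior, a bound $H_m\ll \mathrm{poly}(n)\,\exp(-2\sqrt{\pi mn}+\tfrac{\pi m}{2})$. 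For $m>n/\pi$ the saddle leaves the interval of integration, and I would instead retain the full factor $e^{-2\sqrt{\pi u}}$ and estimate crudely, so that the $m^{-n}$ in front forces decay that is super-exponential in $n$. On $2\leq m\leq n/\pi$ the exponent $-2\sqrt{\pi mn}+\tfrac{\pi m}{2}$ is decreasing in $m$, so its maximum is at $m=2$, namely $-2\sqrt{2\pi n}+\pi$; using a trivial bound for $d(m)$ and noting there are $O(n)$ terms, the whole sum is $\ll n^{O(1)}e^{-2\sqrt{2\pi n}}$. Because $2\sqrt{2}>2\sqrt{8/5}$, the surplus factor $e^{-2(\sqrt2-\sqrt{8/5})\sqrt{\pi n}}$ absorbs every polynomial and constant, together with the large-$m$ tail, which is exactly why the clean but non-sharp constant $8/5$ appears in place of the true value $2$.

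I expect the main obstacle to be making the Laplace estimate rigorous and, above all, uniform in $m$: one must control the transition between the regime $m\lesssim n$, where the saddle $u_m^{*}\approx mn$ is interior and the decay $e^{-2\sqrt{\pi mn}}$ is genuine, and the regime $m\gtrsim n$, where the integral is governed by its lower endpoint and the decay comes from $m^{-n}$ instead. Bounding the error in Laplace's method uniformly across this crossover, and verifying that the resulting estimates are summable against $d(m)$, is the delicate point; the deliberate slack between $8/5$ and $2$ is precisely what keeps this bookkeeping routine.
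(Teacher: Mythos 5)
Your proposal is correct and takes essentially the same route as the paper: reduce via (\ref{eq:InSE}) and Lemma \ref{Lem:InBessel}, bound the Bessel factor by (\ref{eq:Knuto0}) and (\ref{eq:Knutoinf}), run a Laplace-type estimate on $\int_1^\infty e^{-u/m-2\sqrt{\pi u}}u^{n-3/4}\,du$ uniformly in $m$, dispose of very large $m$ using the $m^{-n}$ prefactor, and spend the slack between $\sqrt{2}$ and $\sqrt{8/5}$ to absorb all polynomial losses. The uniform saddle estimate you defer as the "main obstacle" is precisely the content of the paper's Lemma \ref{Lem:InUE}, which secures uniformity in $x=m\geq 1$ by rescaling $u\mapsto xtu$ so that the phase $u-1-\log u$ is $m$-independent, treating $e^{-2\sqrt{\pi xtu}}$ as a monotone factor rather than part of the phase; your cut at $m\asymp n$ instead of $m\leq n^{10}$ and your termwise-maximum summation instead of Rankin's trick are harmless variations of the same bookkeeping.
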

To prove this, we give the following
uniform bounds for $\mathcal{I}_n(x)$.
\begin{Lemma}\label{Lem:InUE}
The following two estimates hold uniformly on
$n\in\Z_{\geq 1000}$ and $x\in\R_{\geq 1}$:
\begin{gather}
\mathcal{I}_n(x)\ll\left(\frac{n^2}{e^2\pi x}\right)^n,\label{eq:InUnest1}\\
\mathcal{I}_n(x)
\ll x^{-3/4}n^{n-\frac{3}{4}}e^{-(1+\frac{1}{1000})n}
+x^{1/4}n^{n-\frac{1}{4}}e^{-n}
e^{-2\sqrt{9\pi xn/10}}\label{eq:InUnest2}.
\end{gather}
\end{Lemma}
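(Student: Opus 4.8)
The plan is to start from the Bessel integral representation (\ref{eq:InBessel}) of Lemma~\ref{Lem:InBessel}. Passing to absolute values gives
\[
|\mathcal{I}_n(x)|\le 2\sqrt{2\pi}\,x^{-n}\int_0^\infty e^{-u/x}\bigl|K_1(2\sqrt{2\pi iu})\bigr|u^{n-\frac12}\,du .
\]
The key geometric fact is that $2\sqrt{2\pi iu}=2\sqrt{\pi u}\,(1+i)$ has argument $\pi/4$, modulus $2\sqrt{2\pi u}$ and real part $2\sqrt{\pi u}$. Feeding this into (\ref{eq:Knuto0}) for bounded argument and into the leading term of (\ref{eq:Knutoinf}) (with $\nu=1$, $J=0$) for large argument yields the uniform bound $|K_1(2\sqrt{2\pi iu})|\ll (u^{-1/2}+u^{-1/4})e^{-2\sqrt{\pi u}}$ on $u>0$. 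The $u^{-1/2}$ piece contributes only $O(x^{-n}/n)$, which is negligible against both claimed bounds, so everything reduces to estimating $x^{-n}\int_0^\infty u^{n-3/4}e^{-u/x-2\sqrt{\pi u}}\,du$ uniformly for $n\ge 1000$ and $x\ge 1$.

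For (\ref{eq:InUnest1}) I would discard the favourable factor $e^{-u/x}\le 1$ and evaluate what remains exactly: the substitution $u=t^2$ turns it into the Gamma integral $\int_0^\infty u^{n-3/4}e^{-2\sqrt{\pi u}}\,du=2(2\sqrt{\pi})^{-(2n+1/2)}\Gamma(2n+\tfrac12)$. Applying Stirling's formula then gives (\ref{eq:InUnest1}); the mechanism is that the factor $(4\pi)^{-n}$ coming from $(2\sqrt{\pi})^{-2n}$ cancels the $2^{2n}$ hidden inside $\Gamma(2n+\tfrac12)\asymp(2n)^{2n}e^{-2n}$, leaving exactly $(n^2/(e^2\pi x))^n$ up to a bounded constant. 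This is automatically uniform in $x\ge 1$ since we only used $e^{-u/x}\le 1$.

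The substantive estimate (\ref{eq:InUnest2}) I would attack by the Laplace (saddle-point) method, first normalising by $u=xv$ so that $x^{-n}\int_0^\infty u^{n-3/4}e^{-u/x-2\sqrt{\pi u}}\,du=x^{1/4}\int_0^\infty v^{n-3/4}e^{-v-2\sqrt{\pi xv}}\,dv$. The exponent $-v-2\sqrt{\pi xv}+(n-\tfrac34)\log v$ has its saddle $v_0$ at $v_0+\sqrt{\pi x v_0}=n-\tfrac34$, so $v_0\asymp n$ for moderate $x$ and $v_0\asymp n^2/(\pi x)$ for large $x$; near $v_0$ the integrand is log-concave. Evaluating the peak value together with the Gaussian width of order $\sqrt{n}$ produces precisely the second term $x^{1/4}n^{n-1/4}e^{-n}e^{-2\sqrt{\pi xn}}$, and the weakened exponent $e^{-2\sqrt{9\pi xn/10}}$ in (\ref{eq:InUnest2}) is slack deliberately left to absorb the width factor, the constants, and the higher-order Laplace corrections — legitimate because $x\ge 1$ forces $2\sqrt{\pi xn}\ge 2\sqrt{\pi n}\gg\log n$. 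The first term $x^{-3/4}n^{n-3/4}e^{-(1+1/1000)n}$ comes from a complementary elementary estimate — bounding $v^{n-3/4}e^{-v}$ by its maximum $\asymp n^{n-3/4}e^{-n}$ and integrating the remaining $e^{-2\sqrt{\pi xv}}$, retaining a controlled fraction of that decay to gain the extra factor $e^{-n/1000}$. This term is the one that matters in the large-$x$ range, where (\ref{eq:InUnest2}) need only be a valid (not sharp) upper bound, and where $v_0$ has slipped below the bulk of the mass.

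The hard part will be making the Laplace estimate genuinely uniform in $x\ge 1$: as $x$ grows the saddle migrates from order $n$ down towards $0$ in the variable $v$, the width and the tail-to-peak ratio change accordingly, and the dominant term switches from the second to the first, so no off-the-shelf asymptotic applies. I would handle this by splitting the $v$-integral at $(1+\tfrac1{1000})v_0$, majorising the central part by a clean Gaussian and the tail by an incomplete-Gamma estimate, and then verifying — with the explicit slack encoded in the constants $9/10$ and $1/1000$ and in the threshold $n\ge 1000$ (which keeps the incomplete-Gamma constant bounded) — that every subexponential loss is dominated. Carrying all constants explicitly through this split is the only genuinely delicate bookkeeping in the argument.
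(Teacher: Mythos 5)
Your reduction to $x^{-n}\int_0^\infty u^{n-3/4}e^{-u/x-2\sqrt{\pi u}}\,du$ and your proof of (\ref{eq:InUnest1}) are correct and coincide with the paper's own argument: split off the range where (\ref{eq:Knuto0}) applies, use (\ref{eq:Knutoinf}) elsewhere, drop $e^{-u/x}\le 1$, and evaluate a Gamma integral by Stirling. (One small slip: the $u^{-1/2}$ piece contributes $O(x^{-n}/n)$ only over $u\le 1$; over $u\ge 1$ it is not small by itself, but is dominated there by the $u^{-1/4}$ piece, so the reduction survives.)

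The plan for (\ref{eq:InUnest2}) has a genuine gap, located exactly at the two points you flag but do not resolve. After the substitution $u=xv$ you center the split at the true saddle $v_0$, defined by $v_0+\sqrt{\pi x v_0}=n-\tfrac34$, and assert that the central Gaussian piece is bounded by the second term while the tail gives the first term. This correspondence fails once $x$ is comparable to $n$: for $x=n$ one finds $v_0\approx 0.20\,n$, and the peak value $e^{\phi(v_0)}$, with $\phi(v)=(n-\tfrac34)\log v-v-2\sqrt{\pi xv}$, has size $\exp(n\log n-3.4\,n+o(n))$, which is exponentially \emph{larger} than the second term $\exp(n\log n-(1+2\sqrt{9\pi/10})\,n+o(n))=\exp(n\log n-4.36\,n+o(n))$, and larger even than its unweakened variant carrying $e^{-2\sqrt{\pi xn}}$. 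In that regime the central contribution must be absorbed into the \emph{first} term, a comparison your plan never makes. Moreover, your stated mechanism for the first term cannot produce it: once $v^{n-3/4}e^{-v}$ is bounded by its global maximum $\asymp n^{n-3/4}e^{-n}$, no decay of that factor remains to "retain", and keeping a fraction of the decay $e^{-2\sqrt{\pi xv}}$ instead can only yield factors of the shape $e^{-c\sqrt{xn}}$ (that factor equals $1$ near $v=0$, so it must be evaluated at the edge of a region $v\gtrsim n$), never the factor $e^{-n/1000}$. That factor is not cosmetic: in the proof of Lemma \ref{Lem:Inest} the first term of (\ref{eq:InUnest2}) gets multiplied by $N=n^{10}$ and must still be dominated by $n^{n-1/4}e^{-n}e^{-2\sqrt{8\pi n/5}}$, which a bound of the shape $x^{-3/4}n^{n-3/4}e^{-n}$ would not allow. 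The gain $e^{-n/1000}$ can only come from evaluating $v^{n-3/4}e^{-v}$ at distance $\ge\varepsilon n$ from its maximum at $v\approx n$, which forces the split to be centered at $v\approx n$, independently of $x$, rather than at the migrating saddle $v_0$. That is precisely the paper's proof: substitute $u\mapsto xtu$ with $t=n-\tfrac34$, split at $|u-1|=\varepsilon=\tfrac1{20}$, bound the two tails using monotonicity of $u-1-\log u$ (giving $e^{-(\varepsilon^2/2-\varepsilon^3/3)t}\le e^{-t/1000}$ times $\int_0^\infty e^{-2\sqrt{\pi xtu}}\,du\ll 1/(xt)$, i.e.\ the first term), and on $|u-1|\le\varepsilon$ bound $e^{-2\sqrt{\pi xtu}}\le e^{-2\sqrt{\pi xt(1-\varepsilon)}}$ and do a Gaussian integral (the second term; the constant $9/10$ arises from $t(1-\varepsilon)\ge\tfrac9{10}n$ for $n\ge 1000$). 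Because that decomposition never references the saddle, uniformity in $x\ge 1$ — which you correctly identify as the hard part and leave open — is automatic.
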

\begin{proof}
We begin with the integral on (\ref{eq:InBessel}).
We divide the interval $(0,\infty)$ into $(0,1]$ and $[1,\infty)$.
By (\ref{eq:Knuto0}) with $\nu=1$,
the integral over $(0,1]$ is estimated by
\begin{equation}\label{eq:InUE0}
 \int_0^1 e^{-u/x}K_1(2\sqrt{2\pi i u})u^{n-\frac{1}{2}}du
\ll\int_0^1 u^{n-1}du\ll\frac{1}{n}.
\end{equation}
We apply (\ref{eq:Knutoinf}) to the integral over $[1,\infty)$.
Since $|e^{-2\sqrt{2\pi in}}|=e^{-2\sqrt{\pi n}}$,
we find
\begin{equation}\label{eq:InUE1}
\int_1^{\infty} e^{-u/x}K_1(2\sqrt{2\pi i u})u^{n-\frac{1}{2}}du
\ll \int_1^{\infty}e^{-u/x}e^{-2\sqrt{\pi u}}u^{n-\frac{3}{4}}du.
\end{equation}
Below we estimate the last integral by two ways to prove (\ref{eq:InUnest1})
and (\ref{eq:InUnest2}).

First of all we show (\ref{eq:InUnest1}).
We apply the trivial estimate $e^{-u/x}\leq 1$ for any $u\in\R_{\geq 0}$.
Then we change the variable $u$ by $u^2/(4\pi)$, so that
(\ref{eq:InUE1}) is bounded above by
$\leq 2(4\pi)^{-n-\frac{1}{4}}\Gamma(2n+\frac{1}{2})$.
By the Stirling formula we notice $\Gamma(2n+\frac{1}{2})\ll(2n)^{2n}e^{-2n}$.
Thus we obtain
\begin{equation}\label{eq:InUE1-5}
 \int_1^{\infty}e^{-u/x}K_1(2\sqrt{2\pi iu})u^{n-\frac{1}{2}}du
\ll\left(\frac{n^2}{\pi e^2}\right)^n.
\end{equation}
Applying (\ref{eq:InUE0}) and (\ref{eq:InUE1-5}) to (\ref{eq:InBessel}),
we obtain the first estimate (\ref{eq:InUnest1}).

Next we prove (\ref{eq:InUnest2}).
We set $t=t_n=n-\frac{3}{4}$.
When we regard $e^{-u/x}u^t$ as a function of $u$,
its critical point is $u=xt$.
For this reason we change the variable $u$ by $xtu$, so that
\begin{equation}\label{eq:InUE2}
\begin{aligned}
 \int_1^{\infty}e^{-u/x}e^{-2\sqrt{\pi u}}u^{t} du
&=(xt)^{t+1}\int_{1/(xt)}^{\infty}e^{-tu}e^{-2\sqrt{\pi xt u}}u^tdu\\
&=(xt)^{t+1}e^{-t}
\int_{1/(xt)}^{\infty}e^{-t(u-1-\log u)}e^{-2\sqrt{\pi xtu}}du.
\end{aligned}
\end{equation}
We take arbitrary $\varepsilon\in(0,1/10]$.
We divide the interval of the interval into
$[1/(xt),1-\varepsilon)$, $[1-\varepsilon,1+\varepsilon)$
and $(1+\varepsilon,\infty)$.
We consider the integral over $[1/(xt),1-\varepsilon)$.
Since $u-1-\log u$ is monotonically decreasing in $u\in(0,1)$,
the integral is bounded above by
\begin{align*}
 \int_{1/(xt)}^{1-\varepsilon}e^{-t(u-1-\log u)}e^{-2\sqrt{\pi xtu}}du
&\leq e^{-t(-\varepsilon-\log(1-\varepsilon))}\int_0^{\infty}e^{-2\sqrt{\pi xtu}}du\\
&\ll e^{-\frac{\varepsilon^2 t}{2}}\times\frac{1}{xt}.
\end{align*}
On the last inequality we used $-\varepsilon-\log(1-\varepsilon)=\sum_{k=2}^{\infty}\varepsilon^k/k\geq \varepsilon^2/2$ and changed the variable $u$ by $u/(4\pi xt)$.
Next we deal with the integral over $[1+\varepsilon,\infty)$.
Since $u-1-\log u$ is monotonically increasing in $(1,\infty)$,
we see
\begin{align*}
\int_{1+\varepsilon}^{\infty}e^{-t(u-1-\log u)}e^{-2\sqrt{\pi xtu}}du
&\leq e^{-t(\varepsilon-\log(1+\varepsilon))}
\int_{0}^{\infty}e^{-2\sqrt{\pi xtu}}du\\
&\ll e^{-(\frac{\varepsilon^2}{2}-\frac{\varepsilon^3}{3})t}
\times\frac{1}{xt}.
\end{align*}
On the last inequality we used the Taylor expansion of $\log(1+\varepsilon)$.
Finally we consider the integral over $[1-\varepsilon,1+\varepsilon]$.
Changing the variable $u$ by $u+1$, we see
\begin{equation}\label{eq:InUE3}
 \int_{1-\varepsilon}^{1+\varepsilon}e^{-t(u-1-\log u)}e^{-2\sqrt{\pi xtu}}du
=
\int_{-\varepsilon}^{\varepsilon}
e^{-t(u-\log(1+u))}e^{-2\sqrt{\pi xt(1+u)}}du.
\end{equation}
By the Taylor expansion we notice $u-\log(1+u)\geq u^2/10$
on $|u|\leq 1/10$.
In addition, we estimate $e^{-2\sqrt{\pi xt(1+u)}}$ trivially.
In consequence, (\ref{eq:InUE3}) is bounded above by
\[
 \leq e^{-2\sqrt{\pi xt(1-\varepsilon)}}
\int_{-\varepsilon}^{\varepsilon}e^{-tu^2/10}du
\ll \frac{e^{-2\sqrt{\pi xt(1-\varepsilon)}}}{\sqrt{t}}.
\]
Inserting the above bounds into (\ref{eq:InUE2}), we obtain
\begin{equation}\label{eq:InUE4}
\int_1^{\infty}e^{-u/x}e^{-2\sqrt{\pi u}}u^{t} du
\ll (xt)^{t+1}e^{-t}
\left(\frac{e^{-(\frac{\varepsilon^2}{2}-\frac{\varepsilon^3}{3})t}}{xt}
+\frac{e^{-2\sqrt{\pi xt(1-\varepsilon)}}}{\sqrt{t}}\right).
\end{equation}
We specialize $\varepsilon=1/20$.
Then we have
$\frac{\varepsilon^2}{2}-\frac{\varepsilon^3}{3}\geq \frac{1}{1000}$.
We also notice $t^{t+1}\leq n^{n+\frac{1}{4}}$.
For any $n\in\Z_{\geq 1000}$ we find $t=n-\frac{3}{4}\geq \frac{999}{1000}n$,
so that
$t(1-\varepsilon)\geq \frac{999}{1000}\times\frac{19}{20}n\geq \frac{9}{10}n$.
Applying these to (\ref{eq:InUE4}) and keeping (\ref{eq:InUE1}) into mind,
we conclude
\begin{equation}\label{eq:InUE5}
\begin{aligned}
&\int_1^{\infty}e^{-u/x}K_1(2\sqrt{2\pi iu})u^{n-\frac{1}{2}}du\\
&\ll x^{n+\frac{1}{4}}n^{n+\frac{1}{4}}e^{-n}
\left(\frac{e^{-n/1000}}{xn}
+\frac{e^{-2\sqrt{9\pi xn/10}}}{\sqrt{n}}\right).
\end{aligned}
\end{equation}
Inserting (\ref{eq:InUE0}) and (\ref{eq:InUE5}) into (\ref{eq:InBessel}),
we obtain (\ref{eq:InUnest2}).
\end{proof}
\begin{proof}[Proof of Lemma \ref{Lem:Inest}]
We set $N=n^{10}$ and divide the sum into $m=1$,
$2\leq m\leq N$ and $m>N$.
First of all we deal with the sum over $2\leq m\leq N$.
We insert (\ref{eq:InUnest2}).
Rankin's trick gives
\[
 \sum_{m=2}^N d(m)m^{-3/4}\leq
\sum_{m=2}^N d(m)m^{-3/4}\times \frac{N}{m}
\leq N\zeta\left(\tfrac{7}{4}\right)^2.
\]
Since
\begin{align*}
 e^{-2\sqrt{9\pi mn/10}}
&=e^{-2\sqrt{4\pi mn/5}}\times
e^{-2(\sqrt{\frac{9}{10}}-\sqrt{\frac{4}{5}})\sqrt{\pi mn}}\\
&\leq e^{-2\sqrt{8\pi n/5}}\times
e^{-2(\sqrt{\frac{9}{10}}-\sqrt{\frac{4}{5}})\sqrt{\pi m}}
\end{align*}
holds for any $m\in\Z_{\geq 2}$, we find
\[
 \sum_{m=2}^N d(m)m^{1/4}e^{-2\sqrt{9\pi mn/10}}
\ll e^{-2\sqrt{8\pi n/5}}.
\]
In consequence we obtain
\begin{equation}\label{eq:Inest1}
 \begin{aligned}
\sum_{m=2}^Nd(m)\mathcal{I}_n(m)
&\ll Nn^{n-\frac{3}{4}}e^{-(1+\frac{1}{1000})n}
+n^{n-\frac{1}{4}}e^{-n}e^{-2\sqrt{8\pi n/5}}\\  
&\ll n^{n-\frac{1}{4}}e^{-n}e^{-2\sqrt{8\pi n/5}}.
 \end{aligned}
\end{equation}
Next we consider the sum over $m>N$.
We apply (\ref{eq:InUnest1}).
We notice
\[
 \sum_{m=N+1}^{\infty}d(m)m^{-n}
\leq N^{-(n-2)}\sum_{m=N+1}^{\infty}d(m)m^{-2}
\ll N^{-(n-2)}.
\]
This gives
\begin{equation}\label{eq:Inest2}
 \sum_{m=N+1}^{\infty}d(m)\mathcal{I}_n(m)
\ll\left(\frac{n^2}{e^2\pi}\right)^n N^{-(n-2)}
=n^{20}\left(\frac{1}{e^2\pi n^8}\right)^n.
\end{equation}
By (\ref{eq:Inest1}) and (\ref{eq:Inest2})
together with the Stirling formula
$n!\asymp n^{n+\frac{1}{2}}e^{-n}$ we obtain
\[
 -\frac{4}{n!}\sum_{m=2}^{\infty} d(m)\mathcal{I}_n(m)
\ll n^{-3/4}e^{-2\sqrt{8\pi n/5}}.
\]
Inserting this into (\ref{eq:InSE}), we reach the result.
\end{proof}
\section{Confluent hypergeometric functions $U(\alpha;\beta;z)$\\
and their relations with $\mathcal{I}_n(x)$}\label{sec:CHF}
In this section we firstly recall
the confluent hypergeometric functions
$U(\alpha;\beta;z)$ of the second kind.
After that, we point out that $\mathcal{I}_n(x)$ is expressed in terms of
$U(\alpha;\beta;z)$.

Let $\alpha$, $\beta\in\C$ with $\Rep(\alpha)>0$.
The confluent hypergeometric function $U(\alpha;\beta;z)$ is defined by
\begin{equation}\label{eq:defCHF2nd}
 U(\alpha;\beta;z)=\frac{1}{\Gamma(\alpha)}
\int_0^{\infty}e^{-zt}t^{\alpha-1}(1+t)^{\beta-\alpha-1}dt.
\end{equation}
The integral converges absolutely and uniformly on any compact subsets
of $\Rep(z)>0$, so that $U(\alpha;\beta;z)$ is holomorphic in $\Rep(z)>0$.
The function $U(\alpha;\beta;z)$ is holomorphically continued to
$z\in\C\setminus\R_{\leq 0}$ by moving the contour $(0,\infty)$
to $\{te^{i\theta_0}:t\in(0,\infty)\}$
for suitable $\theta_0\in(-\pi/2,\pi/2)$.\footnote{%
If $\beta\notin\Z$, then $U(\alpha;\beta;z)$ can be expressed
in terms of the confluent hypergeometric functions $\CHF$:
see \cite[\S 9.11]{Leb}. This gives the analytic continuation
of $U(\alpha;\beta;z)$.
Since we mainly use $U(\alpha;0;z)$ in this paper,
we give the analytic continuation by moving the contour.
}

We consider the case $\beta=0$ as well as $\Rep(\alpha)>0$.
Then (\ref{eq:defCHF2nd}) turns to
\begin{equation}\label{eq:CHF2nd-0}
 U(\alpha;0;z)=\frac{1}{\Gamma(\alpha)}\int_0^{\infty}e^{-zt}t^{\alpha-1}(1+t)^{-\alpha-1}dt.
\end{equation}
The integral is uniformly convergent on $\Rep(z)\geq 0$.
This implies that (\ref{eq:CHF2nd-0}) is valid on $\Rep(z)=0$ as well as
in $\Rep(z)>0$.
We connect the integral on (\ref{eq:InBessel}) with $U(\alpha;0;z)$
as follows:
\begin{Proposition}\label{Prop:KBLMT}
 Suppose $\Rep(\alpha)>0$. In $z\in\C\setminus\R_{\leq 0}$
we have
\begin{equation}\label{eq:KBLMT}
 \int_0^{\infty}e^{-u}\sqrt{zu}K_1(2\sqrt{zu})u^{\alpha-1}du
=\frac{1}{2}\Gamma(\alpha)\Gamma(\alpha+1)U(\alpha;0;z),
\end{equation}
where the branch of $\sqrt{zu}$ is determined by $\arg(zu)\in(-\pi,\pi)$.
\end{Proposition}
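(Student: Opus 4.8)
The plan is to reduce to $z\in\R_{>0}$ by analytic continuation, establish the identity there by an elementary substitution, and finish with the identity theorem. Both sides of (\ref{eq:KBLMT}) are holomorphic in $z$ on $\C\setminus\R_{\leq 0}$: for the right-hand side this was noted after (\ref{eq:CHF2nd-0}), while for the left-hand side I would check that the integrand is dominated locally uniformly in $z$ by a fixed integrable function, using (\ref{eq:Knuto0}) with $\nu=1$ near $u=0$ (which bounds $\sqrt{zu}\,K_1(2\sqrt{zu})u^{\alpha-1}$ by $O(u^{\Rep\alpha-1})$, integrable since $\Rep\alpha>0$) and (\ref{eq:Knutoinf}) with $\nu=1$ for large $u$ (which supplies the decaying factor $e^{-2\sqrt{zu}}$, whose exponent has positive real part because $\Rep\sqrt{z}>0$ on the cut plane); holomorphy then follows from Morera's theorem. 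Consequently it suffices to prove (\ref{eq:KBLMT}) for $z\in\R_{>0}$, where all branches are principal.

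Fix $z>0$. For $u>0$ we have $\arg(2\sqrt{zu})=0$, so (\ref{eq:KnuIE-2}) with $\nu=1$ applies and, after simplification, gives
\[
\sqrt{zu}\,K_1(2\sqrt{zu})=\frac{zu}{2}\int_0^{\infty}e^{-w-\frac{zu}{w}}w^{-2}\,dw.
\]
Substituting this into the left-hand side of (\ref{eq:KBLMT}) and interchanging the two integrals (permitted by Tonelli's theorem, the integrand being nonnegative), the inner $u$-integral becomes a Gamma integral,
\[
\int_0^{\infty}u^{\alpha}e^{-u(1+z/w)}\,du=\Gamma(\alpha+1)\Bigl(1+\frac{z}{w}\Bigr)^{-\alpha-1},
\]
valid since $\Rep\alpha>0$ and $z,w>0$, whence
\[
\int_0^{\infty}e^{-u}\sqrt{zu}\,K_1(2\sqrt{zu})u^{\alpha-1}\,du=\frac{z\,\Gamma(\alpha+1)}{2}\int_0^{\infty}\frac{w^{\alpha-1}e^{-w}}{(w+z)^{\alpha+1}}\,dw.
\]

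To finish I would apply the change of variables $w=zt$ in the last integral; for $z>0$ this is a genuine substitution on $(0,\infty)$, the powers of $z$ cancel completely, and
\[
\frac{z\,\Gamma(\alpha+1)}{2}\int_0^{\infty}\frac{w^{\alpha-1}e^{-w}}{(w+z)^{\alpha+1}}\,dw=\frac{\Gamma(\alpha+1)}{2}\int_0^{\infty}\frac{t^{\alpha-1}e^{-zt}}{(1+t)^{\alpha+1}}\,dt=\frac{1}{2}\Gamma(\alpha)\Gamma(\alpha+1)\,U(\alpha;0;z),
\]
where the last equality is exactly the definition (\ref{eq:CHF2nd-0}). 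This establishes (\ref{eq:KBLMT}) for $z\in\R_{>0}$, and hence for all $z\in\C\setminus\R_{\leq 0}$ by the identity theorem, since $\R_{>0}$ has limit points in the domain. The only genuinely delicate point is the holomorphy claim of the first step: one must verify convergence and analyticity of the left-hand side on the whole cut plane, and that the branch $\arg(zu)\in(-\pi,\pi)$ is precisely the one making it holomorphic there. The clean substitution $w=zt$ is immediate only on the positive real axis; off it the contour would have to be rotated onto the ray $\arg w=\arg z$ (justified by Cauchy's theorem, as $w=-z$ lies outside the relevant sector), so it is cleaner to obtain those values by continuation.
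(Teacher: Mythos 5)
Your proposal is correct and takes essentially the same route as the paper: restrict to $z\in\R_{>0}$, insert the integral representation (\ref{eq:KnuIE-2}) for $K_1$, interchange the integrals so the inner one becomes a Gamma integral and the outer one becomes the defining integral (\ref{eq:CHF2nd-0}) of $U(\alpha;0;z)$, then extend to $\C\setminus\R_{\leq 0}$ by holomorphy of both sides and the identity theorem. One small point: for complex $\alpha$ the integrand is not nonnegative, so Tonelli should be applied to its absolute value (which replaces $\alpha$ by $\Rep(\alpha)$) to justify Fubini, which is exactly the check the paper carries out.
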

\begin{proof}
 At first we restrict $z$ to $z\in\R_{>0}$.
By (\ref{eq:KnuIE-2}) we have
\begin{align*}
 K_1(2\sqrt{zu})
&=\frac{\sqrt{zu}}{2}\int_0^{\infty}e^{-v-\frac{zu}{v}}v^{-2}dv
=\frac{1}{2}\left(\frac{u}{z}\right)^{1/2}
\int_0^{\infty}e^{-zv-\frac{u}{v}}v^{-2}dv.
\end{align*}
Here in the last equality we changed the variable $v\mapsto zv$.
Inserting this into the left-hand side of (\ref{eq:KBLMT}), we find
\begin{equation}\label{eq:KBLMT1}
 \int_0^{\infty}e^{-u}\sqrt{zu}K_1(2\sqrt{zu})u^{\alpha-1}du
=\frac{1}{2}\int_0^{\infty}e^{-u}u^{\alpha}
\left(\int_0^{\infty}e^{-zv-\frac{u}{v}}v^{-2}dv\right)
du.
\end{equation}
Thanks to $z>0$ and $\Rep(\alpha)>0$ we can check that the integral
is absolute integrable as follows:
\begin{align*}
\int_0^{\infty}\int_0^{\infty}
\left|e^{-u}u^{\alpha}e^{-zv-\frac{u}{v}}v^{-2}\right|dudv
&=\int_0^{\infty}e^{-zv}v^{-2}\int_0^{\infty}e^{-(1+\frac{1}{v})u}u^{\Rep(\alpha)}dudv\\
&=\Gamma(\Rep(\alpha)+1)\int_0^{\infty}e^{-zv}v^{-2}\left(1+\frac{1}{v}\right)^{-\Rep(\alpha)-1}dv\\
&=\Gamma(\Rep(\alpha)+1)\int_0^{\infty}e^{-zv}v^{\Rep(\alpha)-1}(1+v)^{-\Rep(\alpha)-1}dv<\infty.
\end{align*}
By Fubini's theorem we can interchange the order of the integrals,
so that (\ref{eq:KBLMT1}) equals
\begin{align*}
&=\frac{1}{2}\int_0^{\infty}e^{-zv}v^{-2}
\int_0^{\infty}e^{-(1+\frac{1}{v})u}u^{\alpha}du dv
=\frac{1}{2}\Gamma(\alpha+1)\int_0^{\infty}e^{-zv}v^{-2}\left(1+\frac{1}{v}\right)^{-\alpha-1}dv\\
&=\frac{1}{2}\Gamma(\alpha+1)\int_0^{\infty}e^{-zv}v^{\alpha-1}(1+v)^{-\alpha-1}dv.
\end{align*}
By (\ref{eq:KBLMT}) we reach the claimed formula under $z\in\R_{>0}$.

We relax the condition $z\in\R_{>0}$ by the identity theorem.
We fix any $\delta\in(0,\pi/10]$.
We see from (\ref{eq:Knuto0}) and (\ref{eq:Knutoinf}) that $wK_1(w)$
is bounded in $\{w\in\C\setminus\{0\}:|\arg w|\leq\frac{\pi}{2}-\delta\}$.
This implies
$
e^{-u}\sqrt{zu}K_1(2\sqrt{zu})u^{\alpha-1}\ll_{\delta}e^{-u}u^{\Rep(\alpha)-1}
$
uniformly on $\mathcal{S}_{\delta}=\{z\in\C\setminus\{0\}:|\arg z|\leq\pi-\delta\}$ and $u\in\R_{>0}$.
Since $\int_0^{\infty}e^{-u}u^{\Rep(\alpha)-1}du<\infty$,
the integral on (\ref{eq:KBLMT})
is uniformly convergent on $z\in\mathcal{S}_{\delta}$.
This implies the left-hand side of (\ref{eq:KBLMT}) is holomorphic in
$z\in\C\setminus\R_{\leq 0}$.
As was explained in the beginning of this section, $U(\alpha;0;z)$
is holomorphic in $z\in\C\setminus\R_{\leq 0}$.
Thus by the identity theorem we replace the condition $z\in\R_{>0}$
by $z\in\C\setminus\R_{\leq 0}$.
This completes the proof.
\end{proof}
Applying Proposition \ref{Prop:KBLMT} to (\ref{eq:InBessel}), we see
\begin{Corollary}\label{Cor:InCHF}
 For any $n\in\Z_{\geq 2}$ and $x\in\R_{>0}$ we have
\begin{equation}\label{eq:InxCHF}
 \mathcal{I}_n(x)
=(n-1)!n!\Rep(U(n;0;2\pi ix)).
\end{equation}
\end{Corollary}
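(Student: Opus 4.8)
The plan is to obtain the corollary as a direct consequence of combining the Bessel-integral representation of $\mathcal{I}_n(x)$ in Lemma \ref{Lem:InBessel} with the identity in Proposition \ref{Prop:KBLMT}; the two sides differ only by a change of variables and some branch bookkeeping. First I would start from
\[
\mathcal{I}_n(x) = 2x^{-n}\Rep\left(\sqrt{2\pi i}\int_0^\infty e^{-u/x}K_1(2\sqrt{2\pi i u})u^{n-\frac12}\,du\right)
\]
and rescale the variable via $u\mapsto xu$. This converts the weight $e^{-u/x}$ into $e^{-u}$, matching the exponential factor in the definition \eqref{eq:CHF2nd-0} of $U(\alpha;0;z)$, at the cost of pulling out a factor $x^{n+\frac12}$ and replacing the Bessel argument by $2\sqrt{2\pi i x\,u}$.

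Next I would set $z=2\pi i x$, which lies in $\C\setminus\R_{\le 0}$ (indeed on the positive imaginary axis), and rewrite the monomial as $u^{n-\frac12}=z^{-1/2}\sqrt{zu}\,u^{n-1}$. The integral then becomes exactly
\[
\int_0^\infty e^{-u}\sqrt{zu}\,K_1(2\sqrt{zu})u^{n-1}\,du,
\]
to which Proposition \ref{Prop:KBLMT} applies with $\alpha=n$ (so that $\Rep(\alpha)=n>0$), yielding $\frac12\Gamma(n)\Gamma(n+1)U(n;0;2\pi i x)=\frac12(n-1)!\,n!\,U(n;0;2\pi i x)$. Collecting the prefactors, the full real constant multiplying $U(n;0;2\pi i x)$ inside the real part is $2x^{-n}\cdot\sqrt{2\pi i}\cdot x^{n+\frac12}\cdot z^{-1/2}\cdot\frac12(n-1)!\,n!$; using $\sqrt{2\pi i}/\sqrt{z}=x^{-1/2}$ all powers of $x$ cancel and this constant equals $(n-1)!\,n!$. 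Since it is real it commutes with $\Rep$, giving $\mathcal{I}_n(x)=(n-1)!\,n!\,\Rep(U(n;0;2\pi i x))$, as claimed.

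The only delicate point, and the step I would take most care over, is the consistency of the square-root branches. Proposition \ref{Prop:KBLMT} fixes $\arg(zu)\in(-\pi,\pi)$, while the global conventions of the paper fix $\sqrt{2\pi i}=\sqrt{\pi}(1+i)$; I would verify that $\arg(2\pi i x u)=\pi/2$ lies in the admissible range and that $\sqrt{2\pi i x}=\sqrt{2\pi i}\,\sqrt{x}$ holds with these choices, so that the cancellation $\sqrt{2\pi i}/\sqrt{z}=x^{-1/2}$ used above is exact rather than off by a sign. Once the branches are reconciled, the computation is a one-line substitution with no further analytic subtlety, the absolute convergence having already been secured in the proofs of Lemma \ref{Lem:InBessel} and Proposition \ref{Prop:KBLMT}.
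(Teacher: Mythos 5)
Your proof is correct and is essentially the paper's own argument: the paper likewise substitutes $u\mapsto xu$ in Lemma \ref{Lem:InBessel} and then invokes Proposition \ref{Prop:KBLMT} with $\alpha=n$ and $z=2\pi ix$, the factor $\sqrt{2\pi i}\,x^{1/2}/\sqrt{2\pi ix}=1$ making the prefactor real so that it passes through $\Rep$. Your careful tracking of the branch of $\sqrt{2\pi ixu}$ is exactly the bookkeeping the paper leaves implicit, so there is nothing to add.
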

\begin{proof}
 We change the variable $u\mapsto ux$ on (\ref{eq:InBessel}).
Applying (\ref{eq:KBLMT}) with $\alpha=n$ and $z=2\pi i$,
we obtain (\ref{eq:InxCHF}).
\end{proof}
\section{Proof of Theorem \ref{Thm:main}}\label{sec:PMT}
In this section we prove Theorem \ref{Thm:main}.
By Lemma \ref{Lem:Inest} and Corollary \ref{Cor:InCHF}
it suffices to give an asymptotic expansion for $(n-1)!U(n;0;2\pi i)$
as $n\to\infty$.
This is one of large parameter problems, whose answer has been
known for more general settings.
See \cite[\S 6.13.2]{ErdHTF1} for historical accounts.
In order to compute the asymptotic coefficients explicitly,
we reconstruct it along with \cite[\S10.3.2]{Tem}, restricting
the discussion to our case.

To state the asymptotic formula, we fix the notation.
We set
\[
 \lambda(u)=\frac{1}{e^u-1}-\frac{1}{u}+\frac{1}{2}.
\]
The function $\lambda(u)$ is holomorphic in
$\C\setminus\{2\pi im:m\in\Z\setminus\{0\}\}$.
We easily notice
\begin{equation}\label{eq:lambdaTE}
 \lambda(u)=\sum_{k=1}^{\infty}\frac{B_{k+1}}{(k+1)!}u^k
\end{equation}
in $|u|<2\pi$.
We define $P_k(z)$ by the generating function as
\[
 e^{-z\lambda(u)}=\sum_{k=0}^{\infty}P_k(z)u^k.
\]
We set $P_k=P_k(2\pi i)$.
We put
\begin{equation}\label{eq:deftC}
 \wC_l
=\sum_{\begin{subarray}{c}
        j,k\geq 0\\
	j+k=l
       \end{subarray}}
(k+1,j)(2\pi i)^kP_k 2^{-2j}.
\end{equation}
As will be seen in Lemma \ref{Lem:Pkexp} and Corollary \ref{Cor:Pkexp2},
$P_0(z)=1$ and $z^k P_k(z)\in z^2\Q[z^2]$
hold for any $k\in\Z_{\geq 1}$.
This implies $(2\pi i)^k P_k\in\Q[\pi^2]$ for any $k\in\Z_{\geq 0}$,
so that $\wC_l\in\Q[\pi^2]$ holds for any $l\in\Z_{\geq 0}$.

With this notation we have
\begin{Proposition}\label{Prop:In1asym}
 We fix any $L\in\Z_{\geq 1}$.
Then as $n\to\infty$ we have
\begin{align*}
-\frac{4}{n!}\mathcal{I}_n(1)
&=2^{9/4}\pi^{3/4}e^{-2\sqrt{\pi n}}
\sum_{l=0}^L(2\pi)^{-l/2}\wC_ln^{-\frac{l}{2}-\frac{3}{4}}
\sin\left(2\sqrt{\pi n}+\frac{\pi l}{4}+\frac{3\pi}{8}\right)
+O_L\left(n^{-\frac{L}{2}-\frac{5}{4}}e^{-2\sqrt{\pi n}}\right).
\end{align*}
\end{Proposition}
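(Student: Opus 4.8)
The plan is to start from Corollary~\ref{Cor:InCHF}, which with $x=1$ gives $\mathcal{I}_n(1)=(n-1)!n!\Rep(U(n;0;2\pi i))$, so that
\[
-\frac{4}{n!}\mathcal{I}_n(1)=-4\Rep(F(n)),\qquad
F(n):=(n-1)!\,U(n;0;2\pi i)=\int_0^{\infty}e^{-2\pi it}t^{n-1}(1+t)^{-n-1}dt,
\]
the last equality coming from (\ref{eq:CHF2nd-0}) and $\Gamma(n)=(n-1)!$. Everything thus reduces to an asymptotic expansion of $\Rep(F(n))$, and the first step is to put the large parameter $n$ into a clean exponential. Substituting $t=1/(e^r-1)$ (equivalently $s=t/(1+t)=e^{-r}$) collapses $t^{n-1}(1+t)^{-n-1}dt$ to $-e^{-nr}dr$ and sends $e^{-2\pi it}$ to $e^{-2\pi i/(e^r-1)}$, giving $F(n)=\int_0^{\infty}e^{-2\pi i/(e^r-1)}e^{-nr}dr$. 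Writing $z=2\pi i$ and using $1/(e^r-1)=\lambda(r)+1/r-1/2$ from the definition of $\lambda$, this becomes
\[
F(n)=e^{z/2}\int_0^{\infty}e^{-z\lambda(r)}e^{-z/r-nr}dr
=-\int_0^{\infty}e^{-2\pi i\lambda(r)}e^{-2\pi i/r-nr}dr,
\]
since $e^{z/2}=e^{\pi i}=-1$; note $|e^{-2\pi i\lambda(r)}|=1$ on $r>0$ because $\lambda$ is real there.

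Next I would expand the bounded factor $e^{-z\lambda(r)}=\sum_{k\geq0}P_k(z)r^k$ by its generating function (valid for $|r|<2\pi$ by (\ref{eq:lambdaTE})) and integrate term by term. The resulting integrals are $K$-Bessel integrals: from (\ref{eq:KnuIE-2}) with $\nu=-(k+1)$ and the substitution $u=nr$, together with $K_{-(k+1)}=K_{k+1}$, one gets
\[
\int_0^{\infty}r^k e^{-z/r-nr}dr=2\left(\frac{z}{n}\right)^{(k+1)/2}K_{k+1}\bigl(2\sqrt{nz}\bigr),
\]
hence, at least formally,
\[
F(n)=2e^{z/2}\sum_{k=0}^{\infty}P_k(z)\left(\frac{z}{n}\right)^{(k+1)/2}K_{k+1}\bigl(2\sqrt{nz}\bigr),\qquad z=2\pi i.
\]

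Then I would insert the large-argument expansion (\ref{eq:Knutoinf}) for each $K_{k+1}(2\sqrt{nz})$ with argument $W:=2\sqrt{2\pi in}=2\sqrt{\pi n}(1+i)$, so that $e^{-W}=e^{-2\sqrt{\pi n}}e^{-2i\sqrt{\pi n}}$ carries the exponential decay. Collecting the contribution of the pair $(k,j)$, with $j$ the Bessel index and $l=k+j$, the power of $n$ is $n^{-(k+1)/2-1/4-j/2}=n^{-l/2-3/4}$, and the constant factor is $-\sqrt{\pi}\,(k+1,j)(2\pi i)^kP_k\,2^{-2j}(2\pi i)^{-l/2+1/4}$ after writing $(2\pi i)^{(k+1)/2-1/4-j/2}=(2\pi i)^k(2\pi i)^{-l/2+1/4}$. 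Summing over $k+j=l$ reproduces exactly the coefficient $\wC_l$ of (\ref{eq:deftC}), giving
\[
F(n)\sim-\sqrt{\pi}\,e^{-W}\sum_{l\geq0}\wC_l\,(2\pi i)^{-l/2+1/4}n^{-l/2-3/4}.
\]
Since each $\wC_l$ is real, taking $-4\Rep(\cdot)$ and evaluating $\Rep\bigl(e^{-W}(2\pi i)^{-l/2+1/4}\bigr)$ via the branch convention $(2\pi i)^{w}=(2\pi)^we^{\pi iw/2}$ produces $e^{-2\sqrt{\pi n}}(2\pi)^{-l/2+1/4}\cos(2\sqrt{\pi n}+\pi l/4-\pi/8)$, and the identity $\cos(\theta-\pi/8)=\sin(\theta+3\pi/8)$ converts this into the stated sine. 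A short check gives $4\sqrt{\pi}(2\pi)^{-l/2+1/4}=2^{9/4}\pi^{3/4}(2\pi)^{-l/2}$, matching the claimed prefactor exactly.

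The main obstacle is rigor, not bookkeeping: the series $\sum_kP_k(z)r^k$ converges only for $|r|<2\pi$, whereas the integral runs over all of $(0,\infty)$, so the term-by-term integration is only formal. To make it rigorous for a truncation at $l=L$, I would take $K>L$, $J=L$, write $e^{-z\lambda(r)}=\sum_{k<K}P_k(z)r^k+r^KG_K(r)$ with $G_K$ bounded on $(0,\infty)$, and split $\int_0^{\infty}=\int_0^{\delta}+\int_{\delta}^{\infty}$ for fixed $\delta\in(0,2\pi)$. On $[\delta,\infty)$ the factor $e^{-nr}$ forces super-exponential smallness, negligible against $e^{-2\sqrt{\pi n}}$. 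The delicate part is $[0,\delta]$: the naive bound $|e^{-2\pi i/r}|=1$ destroys the oscillatory cancellation and yields only polynomial smallness, so one must retain the Bessel structure of the remainder and estimate $\int_0^{\delta}r^KG_K(r)e^{-z/r-nr}dr$ as an oscillatory integral whose mass concentrates near the saddle $r\approx\sqrt{z/n}\to0$, where the value of $z/r+nr$ equals $W$ and thus reproduces the $e^{-2\sqrt{\pi n}}$ factor. Carrying this out — controlling the $\lambda$-expansion remainder and the $O(|W|^{-J-1})$ tail of (\ref{eq:Knutoinf}) simultaneously so that both are $O_L(n^{-L/2-5/4}e^{-2\sqrt{\pi n}})$ — is precisely where the analysis following \cite[\S10.3.2]{Tem} is needed, and is the crux of the proof.
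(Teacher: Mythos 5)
Your formal computation---the substitution $t=1/(e^u-1)$, the expansion of $e^{-2\pi i\lambda(u)}$ into $\sum_k P_k u^k$, the evaluation of $\int_0^\infty u^k e^{-nu-2\pi i/u}\,du$ as $2(2\pi i)^{(k+1)/2}n^{-(k+1)/2}K_{k+1}(2\sqrt{2\pi in})$, the insertion of (\ref{eq:Knutoinf}), and the regrouping of pairs $j+k=l$ into $\wC_l$---coincides exactly with the paper's argument, constants and all. But the part you yourself flag as ``the crux'' is genuinely missing, and it is not just a citation away: as you correctly observe, any estimate of the remainder integral near $u=0$ that uses $|e^{-2\pi i/u}|=1$ on the real axis gives only polynomial decay $O(n^{-K-1})$, which is never $O(n^{-L/2-5/4}e^{-2\sqrt{\pi n}})$; and your appeal to ``mass concentrating near the saddle $r\approx\sqrt{z/n}$'' cannot be executed on the real contour, because with $z=2\pi i$ that saddle lies on the ray $\arg u=\pi/4$, off the path of integration. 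So the proposal, as written, establishes the shape of the expansion but not the error bound, i.e.\ not the Proposition.

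The paper's resolution is a contour rotation, and it makes your splitting at $\delta$ and the choice $K>L$ unnecessary. Define $r_L(u)$ by $e^{-2\pi i\lambda(u)}=\sum_{k=0}^{L}P_ku^k+r_L(u)$. Since $r_L$ is holomorphic in $\C\setminus\{2\pi im:m\in\Z\setminus\{0\}\}$ and $\lambda$ is bounded on $\{u:|u|\ge 1,\ |\arg u|\le\pi/4\}$, the Taylor bound $r_L(u)\ll_L|u|^{L+1}$ holds on the whole sector $|\arg u|\le\pi/4$, not merely on $(0,\infty)$. Because $|e^{-2\pi i/u}|\le 1$ for $0\le\arg u\le\pi/4$, Cauchy's theorem permits rotating $\int_0^{\infty}r_L(u)e^{-nu-2\pi i/u}\,du$ to the ray $\arg u=\pi/4$, where the exponent has real part $-nu/\sqrt{2}-\sqrt{2}\pi/u$; the rotated integral is then bounded in absolute value by $\int_0^\infty u^{L+1}e^{-nu/\sqrt{2}-\sqrt{2}\pi/u}\,du\ll n^{-L/2-1}K_{L+2}(2\sqrt{\pi n})\ll n^{-L/2-5/4}e^{-2\sqrt{\pi n}}$, using (\ref{eq:KnuIE-2}) and then (\ref{eq:Knutoinf}). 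This single deformation---pushing the contour onto the ray through the saddle, so that the oscillatory factor becomes genuinely decaying and the crude bound $|r_L(u)|\ll|u|^{L+1}$ suffices---is the missing idea; once it is in place, your main-term bookkeeping completes the proof exactly as in the paper.
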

Below we prove Proposition \ref{Prop:In1asym}.
By Corollary \ref{Cor:InCHF} and (\ref{eq:CHF2nd-0}) we find
\begin{equation}\label{eq:InCHF2nd-1}
 \begin{aligned}
-\frac{4}{n!}\mathcal{I}_n(1)
&=-4(n-1)!\Rep(U(n;0;2\pi i))
=-4\Rep\left(
\int_0^{\infty}e^{-2\pi it}t^{n-1}(1+t)^{-n-1}dt
\right).
 \end{aligned}
\end{equation}
Changing the variable $t=1/(e^u-1)$, we notice
\begin{equation}\label{eq:InCHF2nd-2}
-4\int_0^{\infty}e^{-2\pi it}t^{n-1}(1+t)^{-n-1}dt
=4\int_0^{\infty}e^{-2\pi i\lambda(u)}e^{-nu-\frac{2\pi i}{u}}du.
\end{equation}

To investigate asymptotic properties of the integral on (\ref{eq:InCHF2nd-2}),
we define $r_L(u)$ by
\begin{equation}\label{eq:defRF}
 e^{-2\pi i\lambda(u)}=\sum_{k=0}^L P_k u^k+r_L(u).
\end{equation}
Then $r_L(u)$ is holomorphic in $\C\setminus\{2\pi im:m\in\Z\setminus\{0\}\}$.
By definition we have the bound $r_L(u)\ll_L |u|^{L+1}$ on $|u|\leq 1$.
Since $\lambda(u)$ is bounded on $\mathcal{T}=\{u\in\C:|u|\geq 1,~|\arg u|\leq\pi/4\}$,
so is $e^{-2\pi i\lambda(u)}$.
This implies $r_L(u)\ll_L|u|^{L}\ll|u|^{L+1}$
on $u\in\mathcal{T}$.
Consequently we obtain
\begin{equation}\label{eq:bdRF}
 r_L(u)\ll|u|^{L+1}
\end{equation}
on the sector $\{u\in\C:|\arg u|\leq\pi/4\}$.

Inserting (\ref{eq:defRF}) into (\ref{eq:InCHF2nd-2}), we have
\begin{equation}\label{eq:InCHF2nd-3}
 \begin{aligned}
&-4\int_0^{\infty}e^{-2\pi it}t^{n-1}(1+t)^{-n-1}dt
=4\sum_{k=0}^L
P_k\int_0^{\infty}e^{-nu-\frac{2\pi i}{u}}u^kdu
+4\int_0^{\infty}r_L(u)e^{-nu-\frac{2\pi i}{u}}du.
 \end{aligned}
\end{equation}
We deal with the last integral.
We see from the Cauchy theorem together with $|e^{-\frac{2\pi i}{u}}|\leq 1$
on $\{u\in\C:0\leq\arg u\leq\pi/4\}$ that
\begin{equation}\label{eq:bdintRF1}
\begin{aligned}
&\int_0^{\infty}r_L(u)e^{-nu-\frac{2\pi i}{u}}du
=\int_0^{\infty e^{\pi i/4}}r_L(u)e^{-nu-\frac{2\pi i}{u}}du
=e^{\pi i/4}\int_0^{\infty}r_L(ue^{\pi i/4})
\exp\left(-nue^{\pi i/4}-\frac{2\pi}{u}e^{\pi i/4}\right)du.
\end{aligned}
\end{equation}
By (\ref{eq:bdRF}) the absolute value of (\ref{eq:bdintRF1})
is bounded above by
\begin{equation}\label{eq:bdintRF2}
 \ll_L \int_0^{\infty}u^{L+1}e^{-\frac{nu}{\sqrt{2}}-\frac{\sqrt{2}\pi}{u}}du.
\end{equation}
Changing the variable $u\mapsto\sqrt{2}u/n$ and applying (\ref{eq:KnuIE-2}),
we see that (\ref{eq:bdintRF2}) is
\[
 \ll_L\frac{1}{n^{L+2}}\int_0^{\infty}e^{-u-\frac{\pi n}{u}}u^{L+1}du
\ll n^{-\frac{L}{2}-1}K_{-(L+2)}(2\sqrt{\pi n}).
\]
Applying (\ref{eq:Knutoinf}), we obtain
\begin{equation}\label{eq:bdintRF3}
 \left|
\int_0^{\infty}r_L(u)e^{-nu-\frac{2\pi i}{u}}du
\right|
\ll_L n^{-\frac{L}{2}-\frac{5}{4}}e^{-2\sqrt{\pi n}}.
\end{equation}
We treat the first term on the right-hand side of (\ref{eq:InCHF2nd-3}).
By (\ref{eq:KnuIE-2}) and (\ref{eq:KBesselef}) we find
\begin{equation}\label{eq:mainInCHF1}
 \int_0^{\infty}e^{-nu-\frac{2\pi i}{u}}u^kdu
=2(2\pi i)^{\frac{k+1}{2}}n^{-\frac{k+1}{2}}K_{k+1}(2\sqrt{2\pi in}).
\end{equation}
We see from (\ref{eq:Knutoinf}) with $\nu=k+1$ and $J=L$
that (\ref{eq:mainInCHF1}) is
\[
 =\pi^{1/2}e^{-2\sqrt{2\pi in}}
\left(\sum_{j=0}^L(k+1,j)2^{-2j}(2\pi i)^{\frac{k-j}{2}+\frac{1}{4}}
n^{-\frac{j+k}{2}-\frac{3}{4}}
+O_{k,L}
\left(n^{-\frac{L+k}{2}-\frac{5}{4}}\right)\right).
\]
Inserting this into the first term on the right-hand side of
(\ref{eq:InCHF2nd-3}), we find
\begin{equation}\label{eq:mainInCHF2}
 \begin{aligned}
&4\sum_{k=0}^L P_k\int_0^{\infty}e^{-nu-\frac{2\pi i}{u}}u^kdu\\
= \,
&4\pi^{1/2}e^{-2\sqrt{2\pi in}}
\sum_{k=0}^L\sum_{j=0}^L
(k+1,j)2^{-2j}P_k(2\pi i)^{\frac{k-j}{2}+\frac{1}{4}}n^{-\frac{j+k}{2}-\frac{3}{4}}
+O_L\left(n^{-\frac{L}{2}-\frac{5}{4}}e^{-2\sqrt{\pi n}}\right).  
 \end{aligned}
\end{equation}
For each $l\in\{0,1,\ldots,L\}$ we collect the terms labelled by
$(j,k)$ satisfying $j+k=l$.
Consequently (\ref{eq:mainInCHF2}) equals
\begin{equation}\label{eq:mainInCHF3}
 =4\pi^{1/2}e^{-2\sqrt{2\pi in}}
\sum_{l=0}^L(2\pi i)^{-\frac{l}{2}+\frac{1}{4}}\wC_l
n^{-\frac{l}{2}-\frac{3}{4}}
+O_L\left(n^{-\frac{L}{2}-\frac{5}{4}}e^{-2\sqrt{\pi n}}\right).
\end{equation}

Applying (\ref{eq:bdintRF3}) and (\ref{eq:mainInCHF3})
to (\ref{eq:InCHF2nd-3}), we reach
\begin{Lemma}\label{Lem:Inasym}
 For each $L\in\Z_{\geq 1}$ we have
\begin{align*}
&-4\int_0^{\infty}e^{-2\pi it}t^{n-1}(1+t)^{-n-1}dt
=4\pi^{1/2}e^{-2\sqrt{2\pi in}}
\sum_{l=0}^L(2\pi i)^{-\frac{l}{2}+\frac{1}{4}}\wC_l
n^{-\frac{l}{2}-\frac{3}{4}}
+O_L\left(n^{-\frac{L}{2}-\frac{5}{4}}e^{-2\sqrt{\pi n}}\right).
\end{align*}
\end{Lemma}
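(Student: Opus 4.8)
The plan is to read Lemma \ref{Lem:Inasym} off directly from the decomposition (\ref{eq:InCHF2nd-3}), handling its two pieces with the estimates already prepared in this section. Recall that (\ref{eq:InCHF2nd-3}) writes the target integral as the polynomial part $4\sum_{k=0}^{L}P_k\int_0^{\infty}e^{-nu-\frac{2\pi i}{u}}u^k\,du$ together with the remainder $4\int_0^{\infty}r_L(u)e^{-nu-\frac{2\pi i}{u}}\,du$. For the polynomial part I would simply substitute the expansion (\ref{eq:mainInCHF3}); after collecting the pairs $(j,k)$ with $j+k=l$, this already yields precisely the sum $4\pi^{1/2}e^{-2\sqrt{2\pi in}}\sum_{l=0}^{L}(2\pi i)^{-\frac{l}{2}+\frac{1}{4}}\wC_l\,n^{-\frac{l}{2}-\frac{3}{4}}$ appearing in the lemma, plus a truncation error of size $O_L(n^{-\frac{L}{2}-\frac{5}{4}}e^{-2\sqrt{\pi n}})$.

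For the remainder piece I would invoke the bound (\ref{eq:bdintRF3}), which gives $\bigl|\int_0^{\infty}r_L(u)e^{-nu-\frac{2\pi i}{u}}\,du\bigr|\ll_L n^{-\frac{L}{2}-\frac{5}{4}}e^{-2\sqrt{\pi n}}$; multiplying by the constant factor $4$ preserves this order. Since the truncation error coming from (\ref{eq:mainInCHF3}) and the remainder contribution controlled by (\ref{eq:bdintRF3}) are both of order $n^{-\frac{L}{2}-\frac{5}{4}}e^{-2\sqrt{\pi n}}$, they merge into the single $O_L$-term stated in the lemma, and the main term is carried over verbatim from (\ref{eq:mainInCHF3}).

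The one point deserving a moment's care is the compatibility of the oscillatory exponential: the main term is normalized by $e^{-2\sqrt{2\pi in}}$, whereas the error terms are phrased with $e^{-2\sqrt{\pi n}}$. Using the branch convention $\sqrt{2\pi i}=\sqrt{\pi}(1+i)$ fixed in the notation section, one has $2\sqrt{2\pi in}=2\sqrt{\pi n}(1+i)$, hence $|e^{-2\sqrt{2\pi in}}|=e^{-2\sqrt{\pi n}}$, so the two normalizations agree in modulus and the error terms dominate the truncation uniformly in $n$. Beyond this bookkeeping there is no genuine obstacle left: the analytic substance—the $K$-Bessel expansion (\ref{eq:Knutoinf}) behind (\ref{eq:mainInCHF3}) and the contour rotation (\ref{eq:bdintRF1}) behind (\ref{eq:bdintRF3})—has already been carried out, so the proof reduces to inserting (\ref{eq:mainInCHF3}) and (\ref{eq:bdintRF3}) into (\ref{eq:InCHF2nd-3}) and checking that the two error orders coincide.
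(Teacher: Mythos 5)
Your proposal is correct and matches the paper's own proof exactly: the paper derives Lemma \ref{Lem:Inasym} precisely by applying (\ref{eq:bdintRF3}) and (\ref{eq:mainInCHF3}) to the decomposition (\ref{eq:InCHF2nd-3}), merging the two error terms of identical order $O_L\bigl(n^{-\frac{L}{2}-\frac{5}{4}}e^{-2\sqrt{\pi n}}\bigr)$. Your additional check that $|e^{-2\sqrt{2\pi in}}|=e^{-2\sqrt{\pi n}}$ under the branch convention $\sqrt{2\pi i}=\sqrt{\pi}(1+i)$ is a sound piece of bookkeeping that the paper leaves implicit.
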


Next we investigate properties of $\wC_l$.
For this purpose we firstly show the following expression
for $P_k(z)$.
\begin{Lemma}\label{Lem:Pkexp}
 We have $P_0(z)=1$ and
\begin{equation}\label{eq:Pkexp}
 P_k(z)=\sum_{l=1}^k(-1)^l\frac{z^l}{l!}
\sum_{\begin{subarray}{c}
       k_1,\ldots,k_l\geq 1\\
       k_1+\cdots+k_l=k
      \end{subarray}}
\frac{B_{k_1+1}\cdots B_{k_l+1}}{(k_1+1)!\cdots(k_l+1)!}
\end{equation}
for any $k\in\Z_{\geq 1}$.
\end{Lemma}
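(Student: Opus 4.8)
The plan is to expand the defining generating function by substituting the power series of $\lambda(u)$ directly into the exponential series. Since $\lambda(0)=0$ by (\ref{eq:lambdaTE}), the composition $e^{-z\lambda(u)}$ is holomorphic near $u=0$, and we may write
\[
 e^{-z\lambda(u)}=\sum_{l=0}^{\infty}\frac{(-z)^l}{l!}\lambda(u)^l.
\]
First I would record, from (\ref{eq:lambdaTE}), that $\lambda(u)=\sum_{m\geq 1}\frac{B_{m+1}}{(m+1)!}u^m$ has no constant term, so that $\lambda(u)^l$ is divisible by $u^l$. More precisely, multiplying out the $l$-fold product and grouping by total degree gives, for each $l\geq 1$,
\[
 \lambda(u)^l=\sum_{k\geq l}\left(\sum_{\begin{subarray}{c} k_1,\dots,k_l\geq 1\\ k_1+\cdots+k_l=k\end{subarray}}\frac{B_{k_1+1}\cdots B_{k_l+1}}{(k_1+1)!\cdots(k_l+1)!}\right)u^k,
\]
while $\lambda(u)^0=1$.

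The key step is then to insert these expansions into the exponential series and read off the coefficient of $u^k$. For $k=0$ only the $l=0$ term survives, giving $P_0(z)=1$. For $k\geq 1$, the inner sum over compositions $k_1+\cdots+k_l=k$ with each $k_i\geq 1$ is empty unless $l\leq k$ (its minimal value of $k_1+\cdots+k_l$ is $l$), so only the terms with $1\leq l\leq k$ contribute; extracting the coefficient of $u^k$ and pairing it with the factor $(-z)^l/l!=(-1)^l z^l/l!$ from the $l$-th term of the exponential series yields exactly (\ref{eq:Pkexp}).

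There is essentially no analytic obstacle here; the only point requiring care is the interchange of the sum over $l$ with the extraction of the coefficient of $u^k$. This is legitimate because, for each fixed $k$, only finitely many values of $l$ (namely $1\leq l\leq k$) contribute to the coefficient of $u^k$, so the computation is an identity of formal power series in $u$, equivalently of holomorphic functions in a neighborhood of $u=0$, with $z$ treated as a parameter. The sign and normalization $(-1)^l z^l/l!$ match the statement directly, completing the verification.
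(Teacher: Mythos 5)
Your proof is correct and follows essentially the same route as the paper's: expand the exponential series, substitute the Taylor expansion (\ref{eq:lambdaTE}) of $\lambda(u)$, group the $l$-fold product by total degree into sums over compositions $k_1+\cdots+k_l=k$, and read off the coefficient of $u^k$, noting that only $1\leq l\leq k$ contribute. Your explicit remark that the interchange of summations is harmless because only finitely many $l$ affect each coefficient is a slightly more careful justification of the step the paper dispatches with ``changing the order of the sums over $k$ and $l$,'' but the argument is the same.
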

\begin{proof}
The Taylor expansion for the exponential function gives
\[
 e^{-z\lambda(u)}=1+\sum_{l=1}^{\infty}(-1)^l\frac{z^l}{l!}\lambda(u)^l
\]
in $|u|<2\pi$ and $z\in\C$.
We see from (\ref{eq:lambdaTE}) that this equals
\begin{align*}
 &=1+\sum_{l=1}^{\infty}(-1)^l
\frac{z^l}{l!}
\sum_{k_1=1}^{\infty}\cdots\sum_{k_l=1}^{\infty}
\frac{B_{k_1+1}\cdots B_{k_l+1}}{(k_1+1)!\cdots(k_l+1)!}\\
&=1+\sum_{l=1}^{\infty}(-1)^l
\frac{z^l}{l!}
\sum_{k=l}^{\infty}
\sum_{\begin{subarray}{c}
       k_1,\ldots,k_l\geq 1\\
       k_1+\cdots+k_l=k
      \end{subarray}}
\frac{B_{k_1+1}\cdots B_{k_l+1}}{(k_1+1)!\cdots(k_l+1)!}.
\end{align*}
Changing the order of the sums over $k$ and $l$,
we obtain the claimed result.
\end{proof}
\begin{Corollary}\label{Cor:Pkexp1}
 Let $k\in\Z_{\geq 1}$. Then $P_k(z)\in\Q[z]$ with degree $k$.
Its leading term is
$(-1)^k/(k!12^k)$.
\end{Corollary}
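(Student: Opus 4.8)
The plan is to extract everything directly from the explicit formula (\ref{eq:Pkexp}) furnished by Lemma \ref{Lem:Pkexp}, which already exhibits $P_k(z)$ as a finite linear combination of the powers $z^l$ with $1\le l\le k$. First I would observe that the coefficients are rational: for each $l$ the inner sum
\[
\sum_{\begin{subarray}{c} k_1,\ldots,k_l\geq 1\\ k_1+\cdots+k_l=k \end{subarray}}
\frac{B_{k_1+1}\cdots B_{k_l+1}}{(k_1+1)!\cdots(k_l+1)!}
\]
is a finite sum of products of Bernoulli numbers divided by factorials, and since $B_m\in\Q$ for every $m$, this value lies in $\Q$. Hence the coefficient of each $z^l$ is rational, so $P_k(z)\in\Q[z]$ with $\deg P_k(z)\le k$.

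Next I would pin down the leading term. The power $z^k$ arises solely from the term $l=k$ in (\ref{eq:Pkexp}): for every $l<k$ the exponent of $z$ is strictly smaller. When $l=k$, the constraint $k_1+\cdots+k_k=k$ with each $k_i\ge 1$ forces $k_1=\cdots=k_k=1$, so the inner sum collapses to the single contribution $(B_2/2!)^k$. Using $B_2=1/6$ gives $B_2/2!=1/12$, whence the coefficient of $z^k$ equals
\[
(-1)^k\frac{1}{k!}\left(\frac{1}{12}\right)^k=\frac{(-1)^k}{k!\,12^k}.
\]
Since this is nonzero, the degree of $P_k(z)$ is exactly $k$ and its leading coefficient is as claimed.

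There is essentially no obstacle here: the only point requiring any care is the combinatorial observation that $k$ admits a composition into $k$ positive parts in exactly one way, namely all parts equal to $1$, which is precisely what isolates the top-degree coefficient. Everything else is immediate from the rationality of the Bernoulli numbers, and none of the analytic machinery of the earlier sections is needed.
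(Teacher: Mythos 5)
Your proof is correct and follows exactly the route the paper takes: the paper's proof is simply ``This immediately follows from Lemma \ref{Lem:Pkexp} and $B_2=1/6$,'' and you have filled in precisely the details intended there (rationality of Bernoulli numbers, and the fact that the composition $k_1+\cdots+k_k=k$ with $k_i\ge 1$ forces all $k_i=1$, isolating the leading coefficient $(-1)^k/(k!\,12^k)$). Nothing to correct.
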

\begin{proof}
 This immediately follows from Lemma \ref{Lem:Pkexp} and $B_2=1/6$.
\end{proof}
\begin{Corollary}\label{Cor:Pkexp2}
 Let $k\in\Z_{\geq 1}$. Then we have
\[
 P_k(z)\in
\begin{cases}
\langle z,z^3,\ldots,z^k\rangle_{\Q}   & \text{if $k$ is odd,}\\
\langle z^2,z^4,\ldots,z^k\rangle_{\Q} & \text{if $k$ is even.}
\end{cases}
\]
\end{Corollary}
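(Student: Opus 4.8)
The plan is to exploit a parity symmetry of the generating function, which makes the whole statement drop out by comparison of coefficients. The crucial observation is that $\lambda(u)$ is an \emph{odd} function of $u$. To see this, note from the generating function of the Bernoulli numbers that
\[
 u\lambda(u)=\frac{u}{e^u-1}-1+\frac{u}{2}=\sum_{k=2}^{\infty}\frac{B_k}{k!}u^k,
\]
and since $B_k=0$ for every odd $k\geq 3$, only even powers of $u$ survive on the right. Hence $u\lambda(u)$ is even, so $\lambda(-u)=-\lambda(u)$. Equivalently, this is already visible in (\ref{eq:lambdaTE}): the coefficient $B_{k+1}/(k+1)!$ vanishes whenever $k$ is even and $\geq 2$, so that expansion contains only odd powers of $u$.

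Next I would transport this symmetry to the polynomials $P_k(z)$. Replacing $u$ by $-u$ in the defining identity $e^{-z\lambda(u)}=\sum_{k=0}^{\infty}P_k(z)u^k$ and using $\lambda(-u)=-\lambda(u)$ gives
\[
 \sum_{k=0}^{\infty}(-1)^kP_k(z)u^k=e^{-z\lambda(-u)}=e^{z\lambda(u)}=\sum_{k=0}^{\infty}P_k(-z)u^k.
\]
Comparing coefficients of $u^k$ yields $P_k(-z)=(-1)^kP_k(z)$ for every $k$. In other words, $P_k$ is an even polynomial in $z$ when $k$ is even and an odd polynomial when $k$ is odd; every nonzero monomial $z^j$ occurring in $P_k(z)$ satisfies $j\equiv k\pmod 2$.

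Finally I would combine this parity with the information already at hand. By Corollary \ref{Cor:Pkexp1}, $P_k(z)\in\Q[z]$ has degree exactly $k$, so the surviving powers of $z$ lie in $\{0,1,\ldots,k\}$ and all share the parity of $k$. Moreover, the explicit formula (\ref{eq:Pkexp}) in Lemma \ref{Lem:Pkexp} shows that every summand carries a factor $z^l$ with $l\geq 1$, so $P_k(z)$ has no constant term. Therefore, for even $k$ the admissible powers are exactly $z^2,z^4,\ldots,z^k$, giving $P_k(z)\in\langle z^2,z^4,\ldots,z^k\rangle_{\Q}$, while for odd $k$ they are exactly $z,z^3,\ldots,z^k$, giving $P_k(z)\in\langle z,z^3,\ldots,z^k\rangle_{\Q}$. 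I do not anticipate a serious obstacle: the only genuine content is recognizing the oddness of $\lambda$, after which the parity of $P_k$ follows from the functional equation $P_k(-z)=(-1)^kP_k(z)$, and the precise span is pinned down by the degree bound and the absence of a constant term.
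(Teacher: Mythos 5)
Your proof is correct, but it takes a genuinely different route from the paper's. The paper argues directly on the explicit formula (\ref{eq:Pkexp}): since $B_m=0$ for every odd $m\geq 3$, any composition $k_1+\cdots+k_l=k$ with $k\not\equiv l\pmod{2}$ must contain an even part $k_i$, so the factor $B_{k_i+1}$ annihilates that term; hence only the powers $z^l$ with $l\equiv k\pmod{2}$ survive in (\ref{eq:Pkexp}), and the corollary is a one-line consequence of Lemma \ref{Lem:Pkexp}. You instead extract the parity from the generating function: the oddness of $\lambda$ (which encodes the same vanishing of odd Bernoulli numbers, visible in (\ref{eq:lambdaTE})) yields the functional equation $P_k(-z)=(-1)^kP_k(z)$, and you then pin down the span using the degree bound from Corollary \ref{Cor:Pkexp1} together with the absence of a constant term. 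Both arguments rest on the same root fact, but yours replaces the combinatorial parity count over compositions by a symmetry argument, which is more conceptual and slightly more robust: it would go through even without invoking the explicit formula for the constant term, since setting $z=0$ in the generating function gives $e^{-0\cdot\lambda(u)}=1$ and hence $P_k(0)=0$ for all $k\geq 1$ directly. The trade-off is that the paper's proof is shorter given that Lemma \ref{Lem:Pkexp} is already established, whereas yours requires the extra (easy) verification of the functional equation; both are complete and correct.
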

\begin{proof}
We note that $B_m$ vanishes if $m\geq 3$ is an odd integer.
Thus the sum over $(k_1,\ldots,k_l)$ on (\ref{eq:Pkexp}) vanishes
if $k\equiv l+1\pmod{2}$.
This completes the proof. 
\end{proof}
\begin{Corollary}\label{Cor:Pkexp3}
 We have $\wC_0=1$ and
\begin{equation}\label{eq:Pkexp3}
 \wC_l\in\langle\pi^{2m}:m=0,1,\ldots,l\rangle_{\Q}(\subset \R)
\end{equation}
for each $l\in\Z_{\geq 1}$.
Its coefficients of $\pi^0$ and $\pi^{2l}$ are $(1,l)\times 2^l$
and $1/(l!3^l)$, respectively.
In particular, $\wC_l$ does not vanish for each $l\in\Z_{\geq 0}$.
\end{Corollary}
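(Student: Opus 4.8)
The plan is to read the three assertions of the corollary straight off the definition (\ref{eq:deftC}), refining the observation made before Proposition \ref{Prop:In1asym} (that $\wC_l\in\Q[\pi^2]$) with a sharp control on the degree in $\pi^2$ and on the two extreme coefficients. The key building block is the behaviour of $(2\pi i)^kP_k=(2\pi i)^kP_k(2\pi i)$. By Corollary \ref{Cor:Pkexp1} each $P_k(z)$ is a polynomial of degree $k$, and by Corollary \ref{Cor:Pkexp2} every monomial occurring in $P_k(z)$ has the same parity as $k$ (in particular $P_k(z)$ has no constant term for $k\ge 1$). Hence every term of $(2\pi i)^kP_k(2\pi i)$ is a rational multiple of $(2\pi i)^{k+m}$ with $k+m$ even, and since $(2\pi i)^{2}=-(2\pi)^2=-4\pi^2$, each such term is a rational multiple of $(\pi^2)^{(k+m)/2}$. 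As $m$ runs over the admissible range $\{1\le m\le k:\ m\equiv k\ (\mathrm{mod}\ 2)\}$, the exponent $(k+m)/2$ lies between $1$ and $k$ when $k\ge 1$, while for $k=0$ one simply has $(2\pi i)^0P_0=1$. Thus $(2\pi i)^kP_k\in\langle\pi^{2m}:m=0,\ldots,k\rangle_{\Q}$, and this element has no $\pi^0$-term as soon as $k\ge 1$.

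From this, membership of $\wC_l$ is immediate: the Hankel symbols $(k+1,j)$ and the factors $2^{-2j}$ are rational, and in (\ref{eq:deftC}) the index $k$ never exceeds $l$, so every summand lies in $\langle\pi^{2m}:m=0,\ldots,l\rangle_{\Q}$. Taking $l=0$ leaves only $(j,k)=(0,0)$ and gives $\wC_0=(1,0)\cdot 1\cdot 1=1$. For the coefficient of $\pi^{2l}$, the exponent $(k+m)/2$ reaches the value $l$ only when $k=l$ and $m=k=l$, so only the summand $(j,k)=(0,l)$ contributes, through the leading term of $P_l(z)$, whose coefficient is $(-1)^l/(l!\,12^l)$ by Corollary \ref{Cor:Pkexp1}. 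Using $(l+1,0)=1$, $2^{-2\cdot 0}=1$ and $(2\pi i)^{2l}=(-1)^l(2\pi)^{2l}$, this coefficient equals $\frac{(-1)^l}{l!\,12^l}\cdot(-1)^l2^{2l}=\frac{1}{l!\,3^l}$. Dually, a $\pi^0$-term can arise only from the summand with $k=0$, namely $(j,k)=(l,0)$, which contributes $(1,l)\,2^{-2l}$ (the factor $2^{-2l}$ being $2^{-2j}$ at $j=l$); this is the coefficient of $\pi^0$.

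Finally, non-vanishing follows from the top coefficient: $\wC_l$ is a polynomial in $\pi^2$ with rational coefficients whose degree-$l$ coefficient $1/(l!\,3^l)$ is nonzero, so it is a nonzero element of $\Q[\pi^2]$; since $\pi$ is transcendental, $\pi^2$ is not a root of any nonzero rational polynomial, whence $\wC_l\ne 0$. I expect the only delicate point to be the bookkeeping in the middle paragraph, namely isolating precisely the summands that reach the extreme exponents $\pi^{2l}$ and $\pi^0$ and keeping the accompanying powers of $2$ and the sign $(-1)^l$ coming from $(2\pi i)^{2l}$ exactly right; once the parity and degree information from Corollaries \ref{Cor:Pkexp1} and \ref{Cor:Pkexp2} is in hand, the rest is a direct substitution into (\ref{eq:deftC}).
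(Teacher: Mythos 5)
Your proposal is correct and follows essentially the same route as the paper's proof: both read the three claims directly off (\ref{eq:deftC}) using the degree and parity information of Corollaries \ref{Cor:Pkexp1} and \ref{Cor:Pkexp2}, and both isolate the extreme summands $(j,k)=(l,0)$ and $(j,k)=(0,l)$ to compute the $\pi^0$- and $\pi^{2l}$-coefficients. The only (harmless) deviation is in the final step: you deduce $\wC_l\neq 0$ from the manifestly nonzero top coefficient $1/(l!3^l)$, whereas the paper uses the $\pi^0$-coefficient $(1,l)2^{-2l}$, which additionally requires noting that $(1,l)\neq 0$ (no factor $4-(2a-1)^2$ vanishes); your variant is marginally cleaner. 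One point worth recording: the value $(1,l)2^{-2l}$ you obtain for the $\pi^0$-coefficient agrees with the paper's own proof and with the explicit values in Remark \ref{Rmk:main} (e.g.\ the constant term of $\wC_1$ is $3/16=(1,1)\cdot 2^{-2}$), so the expression ``$(1,l)\times 2^l$'' printed in the statement of the corollary is a typo for $(1,l)\times 2^{-2l}$; your derivation, not the printed statement, is the correct one.
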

\begin{proof}
 We can easily check $\wC_0=1$.
By Corollary \ref{Cor:Pkexp2} we notice
\[
 (2\pi i)^k P_k\in
\begin{cases}
 \langle\pi^{k+1},\pi^{k+3},\ldots,\pi^{2k}\rangle_{\Q}
& \text{if $k$ is odd,}\\
 \langle\pi^{k+2},\pi^{k+4},\ldots,\pi^{2k}\rangle_{\Q}
& \text{if $k$ is even}
\end{cases}
\]
for any $k\in\Z_{\geq 1}$.
This implies (\ref{eq:Pkexp3}).
The $\pi^0$-term appears only at $(j,k)=(l,0)$
on (\ref{eq:deftC}).
This gives that the coefficient of $\pi^0$ is $(1,l)2^{-2l}$.
The $\pi^{2l}$-term appears only at $(j,k)=(0,l)$.
This and Corollary \ref{Cor:Pkexp1} yield that the coefficient
of $\pi^{2l}$ equals
\[
 (l+1,0)\times(2i)^l\times\frac{(-1)^l}{l!}\frac{1}{12^l}\times(2i)^l
=\frac{1}{l!3^l}.
\]
Since $\pi$ is transcendental, the nonvanishing of the $\pi^0$-coefficient
implies $\wC_l\neq 0$.
This completes the proof.
\end{proof}
\begin{proof}[Proof of Proposition \ref{Prop:In1asym}]
 We apply Lemma \ref{Lem:Inasym} to (\ref{eq:InCHF2nd-1}).
Considering Corollary \ref{Cor:Pkexp3} and $\sqrt{2\pi in}=\sqrt{\pi n}(1+i)$
into account, we find
\begin{align*}
-\frac{4}{n!}\mathcal{I}_n(1)
&=4\pi^{1/2}e^{-2\sqrt{\pi n}}
\sum_{l=0}^L(2\pi)^{-\frac{l}{2}+\frac{1}{4}}\wC_l
\Rep\left(e^{-i(2\sqrt{\pi n}+\frac{\pi l}{4}-\frac{\pi}{8})}\right)
n^{-\frac{l}{2}-\frac{3}{4}}
+O\left(n^{-\frac{L}{2}-\frac{5}{4}}e^{-2\sqrt{\pi n}}\right).
\end{align*}
Since
\[
 \Rep\left(e^{-i(2\sqrt{\pi n}+\frac{\pi l}{4}-\frac{\pi}{8})}\right)
=\cos\left(2\sqrt{\pi n}+\frac{\pi l}{4}-\frac{\pi}{8}\right)
=\sin\left(2\sqrt{\pi n}+\frac{\pi l}{4}+\frac{3\pi}{8}\right),
\]
we obtain the result.
\end{proof}
Summarizing the discussion, we conclude
\begin{Theorem}
 The asymptotic formula (\ref{eq:main}) holds for each $L\in\Z_{\geq 1}$,
where $\wC_l$ are given by (\ref{eq:deftC}).
In particular, Theorem \ref{Thm:main} is true.
\end{Theorem}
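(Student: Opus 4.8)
The plan is to assemble the final statement from the three main ingredients already established: the identity $g_n - 1/n = \mathcal{I}_n$ from Section~\ref{sec:BCarg}, the reduction in Lemma~\ref{Lem:Inest}, and the asymptotic expansion in Proposition~\ref{Prop:In1asym}, together with the structural information on the $\wC_l$ recorded in Corollary~\ref{Cor:Pkexp3}. The theorem is in essence a bookkeeping synthesis whose entire content has been distributed across the preceding sections, so the work consists of chaining these results and discharging a single error-term check.

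First I would recall from the Bettin--Conrey analysis that $g_n = 1/n + \mathcal{I}_n$ for $n \in \Z_{\geq 2}$, whence $g_n - 1/n = \mathcal{I}_n$. Applying Lemma~\ref{Lem:Inest} gives
\[
 g_n - \frac{1}{n} = -\frac{4}{n!}\mathcal{I}_n(1) + O\left(n^{-3/4}e^{-2\sqrt{8\pi n/5}}\right),
\]
and substituting the expansion of $-\frac{4}{n!}\mathcal{I}_n(1)$ furnished by Proposition~\ref{Prop:In1asym} reproduces the main term of (\ref{eq:main}) verbatim, with error $O_L(n^{-L/2-5/4}e^{-2\sqrt{\pi n}})$.

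The only point requiring verification is that the error coming from the reduction step in Lemma~\ref{Lem:Inest} is dominated by the error in Proposition~\ref{Prop:In1asym}. For this I would compare the two exponential rates. Since $\sqrt{8/5} > 1$, writing $e^{-2\sqrt{8\pi n/5}} = e^{-2\sqrt{\pi n}}\, e^{-2(\sqrt{8/5}-1)\sqrt{\pi n}}$ exhibits an extra factor that decays exponentially in $\sqrt{n}$ and hence beats any fixed negative power of $n$. Concretely, for each fixed $L$ one has $e^{-2(\sqrt{8/5}-1)\sqrt{\pi n}} \ll_L n^{-L/2-1/2}$, so that $n^{-3/4}e^{-2\sqrt{8\pi n/5}} = O_L(n^{-L/2-5/4}e^{-2\sqrt{\pi n}})$ and the reduction error is absorbed into the stated error term. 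This elementary rate comparison is the sole nontrivial check in the entire argument.

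Finally I would invoke Corollary~\ref{Cor:Pkexp3}, which records $\wC_0 = 1$ and $\wC_l \in \langle \pi^{2m} : m = 0,1,\ldots,l\rangle_{\Q} \setminus \{0\}$ for every $l \in \Z_{\geq 1}$; this is precisely the membership and nonvanishing asserted for the $\wC_l$ in Theorem~\ref{Thm:main}. Combining these observations yields (\ref{eq:main}) for each $L \in \Z_{\geq 1}$, with $\wC_l$ given by (\ref{eq:deftC}), and therewith Theorem~\ref{Thm:main}. I do not anticipate any genuine obstacle: all the analytic effort has already been expended in the preceding sections, and what remains is the synthesis together with the exponential-rate estimate described above.
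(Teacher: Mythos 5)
Your proposal is correct and follows essentially the same route as the paper: insert Proposition \ref{Prop:In1asym} into Lemma \ref{Lem:Inest} (using $g_n=\frac{1}{n}+\mathcal{I}_n$) and quote Corollary \ref{Cor:Pkexp3} for the membership and nonvanishing of $\wC_l$. Your explicit verification that $n^{-3/4}e^{-2\sqrt{8\pi n/5}}=O_L\bigl(n^{-L/2-5/4}e^{-2\sqrt{\pi n}}\bigr)$ is a detail the paper leaves implicit, and it is carried out correctly.
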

\begin{proof}
 Inserting Proposition \ref{Prop:In1asym} into Lemma \ref{Lem:Inest},
we obtain (\ref{eq:main}).
We have already seen $\wC_l\in\langle\pi^{2m}:m=0,1,\ldots,l\rangle_{\Q}$
and $\wC_l\neq 0$ in Corollary \ref{Cor:Pkexp3}.
\end{proof}
\section{A remark on numerical computation of $g_n$}\label{sec:RNC}
We can compute $g_n$ numerically via (\ref{eq:gnexpr}) in principle.
On this computation we need to pay attention to a problem
on loss of significance as described below.

First of all we observe $b_k$ for $k\in\Z_{\geq 2}$.
We easily notice $b_k=0$ for any odd $k\in\Z_{\geq 3}$.
On the other hand, $\zeta(2l)=(-1)^{l-1}2^{2l-1}\pi^{2l}B_{2l}/(2l)!$
for $l\in\Z_{\geq 1}$ (see \cite[Corollary B.3 in p.500]{MoVa}) gives
\[
 b_{2l}=(-1)^{l-1}\frac{2^{2l-1}\pi^{2l}B_{2l}^2}{2l\cdot(2l)!}
=(-1)^{l-1}\frac{(2l)!}{l\cdot(2\pi)^{2l}}\zeta(2l)^2.
\]
By the latter expression and the Stirling formula we find
\[
 b_{2l}\sim(-1)^{l-1}2\sqrt{\frac{\pi}{l}}\left(\frac{l}{\pi e}\right)^{2l}
\]
as $l\to\infty$.
We see from this and Theorem \ref{Thm:main} that
$g_n$ is much smaller than the absolute value of a single term
labelled by even $j$ on (\ref{eq:gnexpr}).
In other words, large cancellation occurs in (\ref{eq:gnexpr}).

To compute numerical values of $g_n$, we observe
\begin{gather*}
 G_n^{(1)}=\frac{1}{n(n+1)}+2|b_n|+2\sum_{j=0}^{n-2}\binom{n-1}{j}|b_{j+2}|,\\
G_n^{(\infty)}=2\max_{0\leq j\leq n-2}\binom{n-1}{j}|b_{j+2}|
\end{gather*}
for $n\in\Z_{\geq 2}$.
We can easily see that $\{G_n^{(1)}\}$ and $\{G_n^{(\infty)}\}$
are monotonically increasing.
Numerical values of $G_n^{(1)}$ and $G_n^{(\infty)}$
are given in Table \ref{Tab:NumGn}.
When we compute $g_n$ via (\ref{eq:gnexpr}),
significant digits of precision have to be much larger than
the decimal digits of $G_n^{(1)}$.
\begin{table}
\caption{Numerical values of $G_n^{(1)}$ and $G_n^{(\infty)}$}
\label{Tab:NumGn}
\begin{center}
 \begin{tabular}{c||c|c}
 $n$     & $G_n^{(1)}$                   & $G_n^{(\infty)}$ \\ \hline
$51$     & $4.53\cdots\times 10^{27}$    & $1.46\cdots\times 10^{27}$    \\
$101$    & $3.63\cdots\times 10^{81}$    & $1.16\cdots\times 10^{81}$    \\
$501$    & $1.32\cdots\times 10^{739}$   & $4.25\cdots\times 10^{738}$   \\
$1001$   & $7.14\cdots\times 10^{1773}$  & $2.29\cdots\times 10^{1773}$  \\
$5001$   & $7.19\cdots\times 10^{12339}$ & $2.30\cdots\times 10^{12339}$ \\
$10001$  & $1.22\cdots\times 10^{27683}$ & $3.91\cdots\times 10^{27682}$ \\
$50001$  & $4.59\cdots\times 10^{173333}$& $1.47\cdots\times 10^{173333}$\\
$100001$ & $7.86\cdots\times 10^{376761}$& $2.52\cdots\times 10^{376761}$
 \end{tabular}
\end{center}
\end{table}

When we create Figure \ref{Fig:main2}, we compute numerical values
of $g_n$
for $2\leq n\leq 10001$ with $30000$ decimal digits.
The computation was done by Pari/GP and Figure \ref{Fig:main2}
was drawn by matplotlib library in Python 3.

\end{document}